\def\C{\ensuremath{\mathbb{C}}}
\def\D{\ensuremath{\mathbb{D}}}
\def\P{\ensuremath{\mathbb{P}}}
\def\Q{\ensuremath{\mathbb{Q}}}
\def\R{\ensuremath{\mathbb{R}}}
\def\Z{\ensuremath{\mathbb{Z}}}
\def\alg{\mathrm{alg}}
\def\ch{\mathop{\mathrm{ch}}\nolimits}
\def\Coh{\mathop{\mathrm{Coh}}\nolimits}
\def\dim{\mathop{\mathrm{dim}}\nolimits}
\def\ext{\mathop{\mathrm{ext}}\nolimits}
\def\Ext{\mathop{\mathrm{Ext}}\nolimits}
\def\GL{\mathop{\mathrm{GL}}\nolimits}
\def\Hilb{\mathrm{Hilb}}
\def\Hom{\mathop{\mathrm{Hom}}\nolimits}
\def\Ker{\mathop{\mathrm{Ker}}\nolimits}
\def\mod{\mathop{\mathrm{mod}}\nolimits}
\def\NS{\mathop{\mathrm{NS}}\nolimits}
\def\Pic{\mathop{\mathrm{Pic}}}
\def\rk{\mathop{\mathrm{rk}}}
\def\RlHom{\mathop{\mathbf{R}\mathcal Hom}\nolimits}
\DeclarePairedDelimiter\floor{\lfloor}{\rfloor}
\def\Stab{\mathop{\mathrm{Stab}}\nolimits}
\def\into{\ensuremath{\hookrightarrow}}
\def\onto{\ensuremath{\twoheadrightarrow}}
\def\blank{\underline{\hphantom{A}}}
\def\Db{\mathrm{D}^{b}}
\def\VrdH{V^r_d(\abs{H})}
\def\star{(*)}
\newcommand\TFILTB[3]{%
\xymatrix@=1pc{
{0 = {#1}_0} \ar[rr]&&
{{#1}_1} \ar[rr]\ar[ld] &&
{{#1}_2} \ar[r]\ar[ld] &
{\cdots} \ar[r] & { {#1}_{#3-1}} \ar[rr] &&
{{#1}_{#3} = {#1}} \ar[ld]
\\
& *{{#2}_1} \ar@{.>}[ul] &&
{{#2}_2} \ar@{.>}[ul] & &&&
{{#2}_{{#3}}} \ar@{.>}[ul]
}}
\def\abs#1{\left\lvert#1\right\rvert}
\newcommand\stv[2]{\left\{#1\,\colon\,#2\right\}}
\newtheorem*{rep@theorem}{\rep@title}
\newcommand{\newreptheorem}[2]{%
\newenvironment{rep#1}[1]{%
 \def\rep@title{#2 \ref{##1}}%
 \begin{rep@theorem}}%
 {\end{rep@theorem}}}
\newtheorem{Thm}{Theorem}[section]
\newtheorem{Prop}[Thm]{Proposition}
\newtheorem{Lem}[Thm]{Lemma}
\newtheorem{Cor}[Thm]{Corollary}
\newtheorem{thm-int}{Theorem}
\theoremstyle{definition}
\newtheorem{Def-s}[Thm]{Definition}
\newtheorem{Def}[Thm]{Definition}
\newtheorem{Rem}[Thm]{Remark}
\def\C{\ensuremath{\mathbb{C}}}
\def\D{\ensuremath{\mathbb{D}}}
\def\P{\ensuremath{\mathbb{P}}}
\def\Q{\ensuremath{\mathbb{Q}}}
\def\R{\ensuremath{\mathbb{R}}}
\def\Z{\ensuremath{\mathbb{Z}}}
\def\cE{\ensuremath{\mathcal E}}
\def\cF{\ensuremath{\mathcal F}}
\def\cL{\ensuremath{\mathcal L}}
\def\cO{\ensuremath{\mathcal O}}
\def\cP{\ensuremath{\mathcal P}}
\def\cT{\ensuremath{\mathcal T}}
\def\cW{\ensuremath{\mathcal W}}
\def\vv{\ensuremath{\mathbf v}}
\def\iff{\; \Longleftrightarrow \;}
\def\Halg{H^*_{\alg}(X, \Z)}
\def\HalgR{H^*_{\alg}(X, \R)}
\def\Zab{Z_{\alpha, \beta}}
\def\sab{\sigma_{\alpha, \beta}}
\begin{document}

\title[Brill-Noether theory for curves on generic abelian surfaces]{Brill-Noether theory for curves on generic abelian surfaces}

\author{Arend Bayer}
\address{School of Mathematics and Maxwell Institute,
University of Edinburgh,
James Clerk Maxwell Building,
Peter Guthrie Tait Road, Edinburgh, EH9 3FD,
United Kingdom}
\email{arend.bayer@ed.ac.uk}
\urladdr{http://www.maths.ed.ac.uk/~abayer/}

\author{Chunyi Li}
\address{School of Mathematics and Maxwell Institute,
University of Edinburgh,
James Clerk Maxwell Building,
Peter Guthrie Tait Road, Edinburgh, EH9 3FD,
United Kingdom}
\email{chunyi.li@ed.ac.uk}
\urladdr{https://sites.google.com/site/chunyili0401/}


\begin{abstract}
We completely describe the Brill-Noether theory for curves in the primitive linear system on generic abelian surfaces, in the following sense: given integers $d$ and $r$, consider the variety $V^r_d(\abs{H})$ parametrizing curves $C$ in the primitive linear system $\abs{H}$ together with a torsion-free sheaf on $C$ of degree $d$ and $r+1$ global sections. We give a necessary and sufficient condition for this variety to be non-empty, and show that it is either a disjoint union of Grassmannians, or irreducible. Moreover, we show that, when non-empty, it is of expected dimension.
 
This completes prior results by Knutsen, Lelli-Chiesa and Mongardi.
\end{abstract}

\dedicatory{Dedicated to Yuri Ivanovich Manin on the occasion of his 80th birthday.}


\maketitle

\setcounter{tocdepth}{1}
\tableofcontents
\def\mhk{M^h_k} 
\def\lrd{V^r_d(\abs{H})}
\def\ext{\mathrm{ext}}
\def\Ext{\mathrm{Ext}}

\section{Introduction}

By Lazarsfeld's celebrated result \cite{Lazarsfeld:BN-Petri}, a smooth curve in the primitive linear system of a generic K3 surface is Brill-Noether general (in the strongest possible sense, see \cite[Theorem 1.1]{wallcrossing-BrillNoether}). The corresponding question for abelian surfaces is much more subtle. In the present article, we completely determine when the Brill-Noether locus for the entire primitive linear system on generic abelian surfaces is non-empty, and show that, when non-empty, it is of the expected dimension. We show that the Brill-Noether locus is either irreducible or a disjoint union of Grassmannians.
In particular, unlike for K3 surfaces our condition provides many examples with negative
Brill-Noether numbers where \emph{some} of the curves in the primitive linear system are not
Brill-Noether general, in the sense of carrying torsion-free sheaves of prescribed degree and number
of global sections.
This completes previous work by Knutsen, Lelli-Chiesa and Mongardi \cite{KLM}. 

Let $X$ be an abelian surface whose N\'eron-Severi group $\NS(X)$ is generated by the class of an ample line bundle $H$.
Let $g = \frac{H^2}2 + 1$ be the arithmetic genus of curves in $\abs{H}$.
Given integers $r \ge 1$ and $d\geq 1$, write $\chi = d+1-g$, and recall that the Brill-Noether number is given by $\rho(r,d,g) = g - (r+1)(r+1-\chi)$. Let $V^r_d(\abs{H})$ denote the Brill-Noether locus parametrizing curves $C$ in the linear system $\abs{H}$ together with a pure sheaf $L$ supported on $C$ with $c_1(L) - H$, with $\chi(L) = \chi$, and $h^0(L) = r+1$; this includes, of course, smooth curves in $\abs{H}$ equipped with a complete linear system $g^r_d \colon C \to \P^r$ of degree $d$.  Our main result is the following:

\begin{Thm}
Assume $\chi \neq 0$.
The Brill-Noether locus $V^r_d(\abs{H})$ is non-empty if and only if 
\[ \rho + g - 2 \ge D\abs{\chi}-D^2, \]
where $D$ denotes the remainder of division of $r+1$ by $\abs{\chi}$.
Moreover, when it is non-empty, it is generically smooth and of expected dimension $\rho + g - 2$.

When the above inequality is strict, then $\VrdH$ is irreducible. Otherwise, it is a disjoint union of $\left(\frac{g-1}{\chi}\right)^2$ Grassmannians.
\label{main1}
\end{Thm}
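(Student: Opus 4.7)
My plan is to translate the Brill--Noether problem into a question about moduli of Bridgeland-semistable objects on $X$ via the derived Lazarsfeld--Mukai construction, and then analyze walls in the stability manifold together with Yoshioka's existence theorems for moduli of sheaves on abelian surfaces.

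\textbf{Step 1 (Reduction to moduli of stable objects).} Assuming $\chi>0$ (by Serre duality one can reduce to this case), I associate to $(C,L)\in\VrdH$ the complex $E:=\cone\bigl(H^0(L)\otimes\cO_X\to L\bigr)[-1]$, with Mukai vector $w=(r+1,-H,-\chi)$. For a Bridgeland stability condition $\sigma$ chosen on the relevant Brill--Noether wall for $w$, the claim is that $E$ is $\sigma$-semistable, yielding a classifying map $\VrdH\to M_\sigma(w)$. Reconstructing $(C,L)$ from $[E]$ amounts to choosing an $(r+1)$-dimensional subspace of $\Hom(E,\cO_X)$, so the classifying map has Grassmannian fibers.

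\textbf{Step 2 (Numerics and non-emptiness).} Write $r+1=q\abs{\chi}+D$ with $0\le D<\abs{\chi}$. I expect the decisive wall to admit a destabilizing decomposition $w=q\,u+w'$ with $u=(\abs{\chi},-H,-(g-1)/\abs{\chi})$ isotropic ($u^2=0$) and $w'$ a residual class of rank $D$. A direct Mukai-square computation should identify the condition $(w')^2\ge 0$ with the stated inequality $\rho+g-2\ge D\abs{\chi}-D^2$, which by Yoshioka's existence theorem for primitive positive classes on abelian surfaces is the precise non-emptiness criterion for $M_\sigma(w')$. Dimension $\rho+g-2$ and generic smoothness then follow by combining the known dimension of $M_\sigma(w)$ with the dimension of the Grassmannian fibers via standard $\Ext$ computations.

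\textbf{Step 3 (Irreducibility vs.\ Grassmannian decomposition).} If the inequality is strict then $(w')^2>0$, so $M_\sigma(w)$ is irreducible by Yoshioka and $\VrdH$ is irreducible by the Grassmannian-fiber structure plus surjectivity of the classifying map. In the equality case $(w')^2=0$, $E$ is strictly semistable on the wall and splits as $F_1^{\oplus q}\oplus F_2$ with $F_1$ of isotropic class $u$ and $F_2$ of class $w'$; the Brill--Noether constraint restricts $F_1,F_2$ jointly to a discrete set of cardinality $\bigl((g-1)/\chi\bigr)^2$, and over each such choice the admissible gluings of $F_1^{\oplus q}$ with $F_2$ form a single Grassmannian of the predicted dimension.

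\textbf{Main obstacle.} The principal difficulty is identifying the correct wall in the stability manifold and carrying out the precise numerical bookkeeping that matches the wall-crossing condition with $\rho+g-2\ge D\abs{\chi}-D^2$; this requires careful use of Mukai-square positivity, the Hodge index theorem in Picard rank one, and the interaction of the remainder $D$ with destabilizing subobjects. A secondary difficulty is the exact enumeration of $\bigl((g-1)/\chi\bigr)^2$ components in the boundary case, which should come from combining the Albanese map for $M(u)$ with the rigidity imposed by the Brill--Noether data.
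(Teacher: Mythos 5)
Your overall framework (pass to a moduli space of Bridgeland-(semi)stable objects, locate the destabilizing wall, invoke Yoshioka's non-emptiness and Albanese results, and read off the Grassmannian structure of the fibers) is in the right spirit, and the reduction to one sign of $\chi$ via Serre duality matches the paper. However, there is a genuine gap in Step 2: the decisive wall is not the one you describe, and your numerics do not reproduce the stated inequality. The paper identifies the relevant wall as the locus where $\cO_X$ and its degree-zero twists acquire the same phase as the class $\vv$; the destabilizing Jordan--H\"older factors are copies of $(1,0,0)$, not of a rank-$\abs{\chi}$ isotropic class $u=(\abs{\chi},-H,-(g-1)/\abs{\chi})$. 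Your $u$ is not even an integral class unless $\abs{\chi}$ divides $g-1$, and the criterion $(w')^2\ge 0$ for $w'=w-qu$ gives the wrong answer: already for $q=1$ (so $r+1=\abs{\chi}+D$) one computes $(w')^2=2D\bigl(\chi-(g-1)/\chi\bigr)$, so $(w')^2\ge 0$ reads $\chi^2\ge g-1$, whereas the correct condition $\rho+g-2\ge D\abs{\chi}-D^2$ simplifies in this case to $g-1\ge D\abs{\chi}$ --- a bound in the opposite direction.

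The missing idea is the induction forced by $h^1(\cO_X)\neq 0$. After splitting off $\cO_X\otimes H^0(E)$ the quotient $E'$ may still have sections (indeed $h^0(E')\ge h^0(E)+\chi$), so a single wall-crossing step does not terminate; one must iterate, stratifying by the ``reduced'' object $E_{\mathrm{red}}$ obtained by stripping all $\cO_X$-factors, and the arithmetic of division with remainder arises precisely from this iteration (encoded in the paper by the piecewise-linear function $\Delta_{KLM}$). The class whose Mukai square controls non-emptiness is the one at the bottom of the induction, namely $w_{k_0}=\vv-\tfrac12(r+1+D)(s+1)\cdot(1,0,0)$ where $r+1=s\abs{\chi}+D$ --- note the triangular-number coefficient, which no single destabilizing decomposition $w=qu+w'$ produces. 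Relatedly, in the boundary case the count $\left(\frac{g-1}{\chi}\right)^2$ comes from the degree of the determinant map on $M(w_{k_0})$, whose rank is $(g-1)/\abs{\chi}$; attributing it to $M(u)$ with $u$ of rank $\abs{\chi}$ would instead give $\chi^2$. Your Step 1 and Step 3 templates could survive in a corrected argument, but as written the proof does not go through without replacing the single-wall analysis by this inductive stratification.
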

In \cite[Theorems 1.4 and A.1]{KLM}, the authors showed that the last condition is necessary for non-emptiness, and that in this case the Brill-Noether locus has a component of expected dimension; they also showed that this condition is sufficient when $d \ge r(r+1)$, and further implies the existence of \emph{smooth} curves with $g^r_d$. In other words, in addition to their results we show that this necessary condition is also sufficient for $d < r(r+1)$, and we determine when the Brill-Noether locus is irreducible.

The fact that $\VrdH$ has expected dimension implies in particular that a generic curve in $\abs{H}$ is Brill-Noether general, in the sense that it has no line bundle with $\rho < 0$. This was first proved by Paris \cite{Paris:Petri}, along with the Petri property, under the same assumption $\chi \neq 0$.

In fact, a similar statement holds for certain moduli spaces of vector bundles. 
Let $\vv \in H^*(X)$ be a class of the form
$\vv = (k, c_1(H), \chi)$ for some integers $k \in H^0(X, \Z) = \Z$ and
$\chi \in H^4(X, \Z) = \Z$. Then the moduli space $M_H(\vv)$ of Gieseker-stable sheaves with Chern character $v$ is smooth and irreducible of dimension $\vv^2 + 2$. 
Let $M^{r+1}_H(\vv)$ denote the subset of sheaves $E$ with $h^0(E) = r+1$.

\begin{Thm}\label{mainthm2}
Assume that $\chi < 0$, and let $r, g, D$ be as above. Then
$M^{r+1}_H(\vv)$ is non-empty if and only if
\[\vv^2-(r+1)(r+1-\chi)\ge D(-\chi)-D^2.\] 
In this case, it is irreducible and of expected dimension $\vv^2+2-(r+1)(r+1-\chi)$.
\end{Thm}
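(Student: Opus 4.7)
The plan is to use Bridgeland wall-crossing to reduce Theorem \ref{mainthm2} to Theorem \ref{main1}, the case $k=0$ of torsion sheaves on curves in $|H|$.

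First, a numerology check. Since $\vv^2 = 2(g-1)-2k\chi$, the inequality $\vv^2-(r+1)(r+1-\chi) \ge D(-\chi)-D^2$ rearranges to $\rho+g-2-2k\chi \ge D(-\chi)-D^2$, which specialises exactly to the condition of Theorem \ref{main1} when $k=0$ and is strictly weaker for $k\ge 1$ (since $\chi<0$). This fits the heuristic that extending by $O_X$ adds sections at the cost of raising rank, so the Brill-Noether locus becomes larger.

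The main construction is a Lazarsfeld--Mukai-type evaluation-map argument. For a Gieseker-stable $E$ with $\vv(E)=\vv$, choosing an $(r+1)$-dimensional subspace $V \subset H^0(E)$ gives a map $V\otimes O_X \to E$ which, for $k \ge r+1$, is injective with torsion-free cokernel $F$ fitting into
\[
0 \to O_X^{r+1} \to E \to F \to 0,
\]
where $\vv(F) = \vv' := (k-r-1, H, \chi)$ and generically $h^0(F)=0$. Conversely $E$ is recovered as an extension of $F$ by $O_X^{r+1}$, parametrized modulo $\GL_{r+1}$ by a point of $\Gr(r+1, \Ext^1(F, O_X))$. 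For generic $F$, Serre duality and the Mukai pairing give $\ext^1(F,O_X)=-\chi$, so the correspondence identifies $M^{r+1}_H(\vv)$ birationally with a $\Gr(r+1,-\chi)$-bundle over an open subset of $M_H(\vv')$; a direct dimension count confirms that the expected dimensions match. When $k < r+1$ or $r+1 > -\chi$ the naive extension construction fails, but the same correspondence is formalised via wall-crossing at the Bridgeland wall where the phases of $O_X$ and $\vv$ agree, working with complexes in place of sheaves. Iterating this reduction (with a dual Hecke modification to handle any final remainder) eventually brings us to the regime $k = 0$, where Theorem \ref{main1} applies.

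Non-emptiness, expected dimension, and smoothness then transport along the chain of birational correspondences. Irreducibility, even in the boundary equality case of the non-emptiness inequality, follows because once $k \ge 1$ the Grassmannian-bundle fibres are positive-dimensional and fibre over the irreducible base $M_H(\vv')$, which fuses the disjoint Grassmannian components of Theorem \ref{main1} into a single irreducible variety. The main obstacle is making the wall-crossing correspondence rigorous across the full range of $(k,r)$: verifying that Gieseker- or Bridgeland-stability is preserved at each step, that the $h^0$-stratification transports cleanly through the wall, and that irreducibility genuinely follows from the Grassmannian-bundle structure in the boundary case, where the fusion of the disjoint components must actually occur.
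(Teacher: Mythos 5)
There is a genuine gap, in two places. First, your pivotal claim that the evaluation quotient $F = E/(\cO_X\otimes V)$ ``generically'' satisfies $h^0(F)=0$, so that $\ext^1(F,\cO_X)=-\chi$ and $M^{r+1}_H(\vv)$ becomes a $\Gr(r+1,-\chi)$-bundle over an open part of $M_H(\vv')$, is false in general, and this is precisely the central difficulty of the problem. From $\cO_X^{r+1}\into E\onto F$ one only gets $\ext^1(F,\cO_X)\ge r+1$, hence $h^0(F)\ge r+1+\chi$, which is a \emph{positive} lower bound as soon as $r+1>-\chi$; in that range the extension $E$ can only exist because $F$ itself has many sections, and one must control $h^0(F)$ exactly, not just bound it. This forces the induction that the paper runs: the stratification $M^h_{k,k_{\mathrm{red}}}$ by the class of $E_{\mathrm{red}}$, the emptiness bound of Proposition \ref{mhtkn} (needed even to show that the extensions you construct have $h^0$ equal to, and not larger than, $r+1$), and the piecewise-linear function $\Delta_{KLM}$ that records the accumulated section counts through the iterated quotients. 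Your phrase ``iterating this reduction (with a dual Hecke modification to handle any final remainder)'' is where all of this work is hiding.

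Second, the reduction target is wrong: you cannot reduce Theorem \ref{mainthm2} to Theorem \ref{main1}. The quotient step lowers the rank from $k$ to $k-r-1$, which does not land at rank $0$ except when $k=r+1$, and — as your own numerology shows — the non-emptiness threshold at rank $k\ge 1$ is strictly weaker than at rank $0$ for the same $r$, so there are classes $\vv$ with $M^{r+1}_H(\vv)\neq\emptyset$ while the corresponding $V^r_d(\abs{H})$ is empty; no chain of such reductions can terminate at an instance of Theorem \ref{main1}. The paper's logic runs the other way: both theorems are deduced from the common technical Theorem \ref{thm:maintechnical} about the loci $M^h_k\subset M_{\sigma_+}(w_k)$, proved by induction on $k$ starting from sectionless objects $M^0_{k_{\mathrm{red}}}$ of large negative rank; Theorem \ref{mainthm2} is then the identification $M^{r+1}_H(\vv)=M^{r+1}_{k+R}$ plus arithmetic. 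Finally, your explanation of irreducibility in the boundary case is off: the disjoint Grassmannians in Theorem \ref{main1} arise because $\VrdH$ is a \emph{fiber} of the determinant map $M^{r+1}_R\to\hatX$, which is finite onto its image when the base $M^0_{k_0}$ is two-dimensional; $M^{r+1}_H(\vv)$ carries no such determinant constraint, and is irreducible simply because it is the whole Grassmannian bundle over the irreducible $M^0_{k_0}$ — not because positive-dimensional fibers ``fuse'' components.
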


\subsection{Comparison with \cite{KLM}}
When $H^2=54$, in other word, $g=28$,  the following table lists the existence and emptiness result for curves in $V^r_d(\abs{H})$ for $20\leq d\leq 26$ and $r\geq 1$.\\

\begin{tabular}{|l||*{7}{c|}}\hline
\backslashbox{degree}{r(section)}
&\makebox[3em]{1}&\makebox[3em]{2}&\makebox[3em]{3}
&\makebox[3em]{4}&\makebox[3em]{5}&\makebox[3em]{6}&\makebox[3em]{7}\\\hline\hline
20 &  BN & KLM & KLM & $\phi$ & $\phi$ & $\phi$ & $\phi$\\\hline
21 &  BN & BN & KLM & $\phi$ & $\phi$ & $\phi$ & $\phi$\\\hline
22 &  BN & BN & KLM & KLM & $\phi$ & $\phi$ & $\phi$\\\hline
23 &  BN & BN & KLM & KLM & $\phi$ & $\phi$ & $\phi$\\\hline
24 &  BN & BN & BN & KLM & Thm.~\ref{main1} & $\phi$ & $\phi$\\\hline
25 &  BN & BN & BN & KLM & $\Delta$ & $\phi$ & $\phi$\\\hline
26 &  BN & BN & BN & KLM & $\Delta$ & $\phi$ & $\phi$\\\hline
\end{tabular}
\begin{center}
Table: (non-)emptiness of $V^r_d(\abs{H})$.
\end{center}
The labels in each box indicate the following situations:
\begin{description}
\item[$\phi$] There is no line bundle of degree $d$ with $r+1$ section on a curve in $\abs{H}$; this follows from \cite[Theorem A.1]{KLM} or our Theorem \ref{main1}.
\item[BN] The Brill-Noether number, $\rho(g,r,d)$, is non-negative, and thus every smooth curves carries a $g^r_d$ by \cite{Kempf:Schubert,Kleim-Lak},
\item[KLM] We have $\rho(g, r, d) < 0$, but there exists a smooth curve with a $g^r_d$ by \cite[Theorem 1.4]{KLM}. 
\item[$\Delta$] $\VrdH$ is non-empty, see discussion below.
\item[Thm. 1.1] The non-emptiness of $\VrdH$ follows from Theorem \ref{main1}.
\end{description}

We now give more explanation on the cells marked `$\Delta$' . First of all, by our Theorem \ref{main1}, $V^r_d(\abs{H})$ is non-empty. On the other hand, the non-emptiness of such $V^r_d(\abs{H})$ can be deduced from \cite[Theorem 1.6 (i)]{KLM} with the same arguments as that in \cite[Example 5.15]{KLM}. For example, when $r=5$ and $d=25$,  one may let $k=2$ and $\delta =15$. It is direct to check that $\delta$, $k$, and $g$ (or $p$ in the notation of \cite{KLM}) satisfy the inequality in Theorem 1.6(i) in \cite{KLM}. By the theorem, there is a  curve $C$ in $\abs{H}$ with $15$ nodes, and its normalization $\tilde{C}$ is hyper-elliptic. In particular,  $\tilde{C}$ carries linear series with degree $10$ and rank $5$. Push-forward to $C$ produces a torsion free sheaf with degree $10+15=25$ and rank $5$.
For the box that is marked by `Thm.~1.1', the non-emptiness of $V^r_d(\abs{H})$ is due to Theorem \ref{main1} and is completely new. Interesting readers may check that Theorem 1.6 (i) in \cite{KLM} does not provide suitable nodal curves that may carry linear series as desired. 

\begin{Rem}
 Note that $V^r_d(\abs{H})$ in our setting is different from the space $\abs{L}^r_d$ defined in \cite{KLM}: for the latter, the support curve is required to be smooth. 
\label{denotion}
\end{Rem}

\subsection{Proof strategy}
We proof Theorem \ref{main1} by wall-crossing. Let $\vv = (0, H, \chi)$; then the moduli space $M_H(\vv)$ contains all pure torsion sheaves $F$ with $c_1(F) = H$ and $\chi(F) = \chi$. It can be reinterpreted as the moduli space $M_{\sigma}(\vv)$ of Bridgeland-stable objects when $\sigma$ is contained in the \emph{Gieseker-chamber} for $\vv$. We then show (for $\chi < 0$) that the first wall bounding the Gieseker-chamber destabilizes exactly those $F$ with
$h^0(F \otimes L_0) \neq 0$ for degree zero line bundle $L_0$ on $X$. In particular, there is a natural short exact sequence
\[ \cO_X \otimes H^0(F) \into F \onto F' \]
of semistable objects that is part of the Jordan-H\"older filtration of $F$. However, unlike in the case of K3 surfaces treated in \cite{wallcrossing-BrillNoether}, we may have $h^0(F') \neq 0$; in fact, this may be necessary for the extension $F$ to exist, as otherwise $\Ext^1(F', \cO_X)$ might be too small. 

Thus, we have to proceed by induction. This induction is possible since all steps remain part of the finite Jordan-H\"older filtration of $F$. On the other hand, it is precisely this induction process that leads to the slightly arithmetic nature of our results (involving the division by remainder). 

\subsection{Generalisation} All our results hold slightly more generally for any polarized abelian surface $(X, H)$ satisfying the following:
\begin{description}
\item[Assumption (*)] $H^2$ divides $H.D$ for all curve classes $D$ on $X$.
\end{description}
To simplify the presentation, we first explain our entire argument in the case of Picard rank one; we then explain in Section \ref{sect:moregeneral} how to extend the arguments to this situation.

\subsection{Related questions} In addition to \cite{KLM}, there have been a number of recent articles studying Brill-Noether loci of curves in $M_g$ carying a $g^r_d$ for negative Brill-Noether numbers, in particular \cite{Pflueger:negativeBN,Pflueger:BN-k-gonal,Jensen-Ranganathan:BN-fixed-gonality}. However, our results have no direct implication on this question, as we cannot prove that smooth curves appear in our locus $\VrdH$.

\subsection{Acknowledgments} It is a pleasure to thank Emanuel Macr{\`{\i}}, Giovanni Mongardi, Soheyla Feyzbakhsh, and Sam Payne for a number of helpful discussions and comments. Both authors are supported by ERC starting grant no. 337039 ``WallXBirGeom''.

It is a pleasure to dedicate this article to Yuri I.~Manin. He first suggested to the first author to start exploring stability conditions and wall-crossing. This article is one of many puzzle pieces of a large picture of applications of wall-crossing that has emerged since then.

\section{Background: stability conditions, moduli spaces}
\label{sect:background}

\subsection{Review: stability conditions on abelian surfaces}
Let $X$ be an abelian surface of Picard rank one; we denote by $D^b(X)$ the bounded derived category of coherent sheaves on
$X$. In this section,  we will review the description of a component of the space $\Stab(X)$ of stability conditions on $D^b(X)$ given in \cite[Section 15]{Bridgeland:K3}.

Given an object $E \in D^b(X)$ we write $\ch(E) \in H^*(X, \Z)$ for its Chern character with value in cohomology. We write $H^*_{\mathrm{alg}}(X, \Z)$ for its algebraic part, i.e.~the image of $\ch$.

Let $H$ be a line bundle as above; by abuse of notation, we will also write $H$ for its class in $\NS(X)$. Given $\beta \in \R$, we defined the $\beta$-twisted Chern character by
 \[ \ch^\beta(E) := e^{-\beta H}.\ch(E) \in H^*_{\mathrm{alg}}(X, \R) \]
and the $\beta$-twisted slope of a coherent sheaf $E \in \Coh X$ by 
\[ \mu_{H,\beta}(E) := \begin{cases}
\frac{H.\ch^\beta_1(E)}{H^2\ch_0(E)} & \text{if $\ch_0(E) > 0$} \\
+\infty & \text{if $\ch_0(E) = 0$}.
\end{cases}\]
This leads to the usual notion of slope-stability, and the construction of the following torsion pair
\begin{align*}
\cT^{\beta} & :=
\langle E \colon \text{$E$ is $\mu_{H, \beta}$-semistable with $\mu_{H, \beta}(E) > 0$ } \rangle, \\
\cF^{\beta} & :=
\langle E \colon \text{$E$ is $\mu_{H, \beta}$-semistable with $\mu_{H, \beta}(E) \le 0$} \rangle,
\end{align*}
where $\langle \cdot \rangle$ denotes the extension-closure.
This is a torsion pair in $\Coh X$. 
Following \cite{Happel-al:tilting,Bridgeland:K3}, this lets us define a new heart of a bounded t-structure in $\Db(X)$ as follows:
\[
\Coh^\beta X := \langle \cF^\beta[1], \cT^\beta \rangle =
\stv{E}{H^{-1}(E) \in \cF^\beta, H^0(E) \in \cT^\beta, H^i(E) = 0 \text{ for $i \neq 0, -1$}}.
\]

For $\alpha > 0$ and $\beta \in \R$ as before, we define the central charge $Z_{\alpha, \beta} \colon K(X) \to \C$ by
\begin{equation} \label{eqn:Zab}
Z_{\alpha, \beta}(E) := - \int_X e^{-i\alpha H} . \ch^\beta(E)
= -\ch^\beta_2(E) + i \alpha H. \ch_1^\beta(E) + \frac{\alpha^2}2 H^2\ch_0(E).
\end{equation}
Note that $Z_{\alpha, \beta}$ factors via the Chern character
\begin{equation} \label{eq:ch}
\ch \colon K(X) \to \Halg \cong \Z^3.
\end{equation}

We will first state Bridgeland's result constructing stability conditions on $\Db(X)$, and then explain its meaning.
\begin{Thm}[{\cite[Section 15]{Bridgeland:K3}}] \label{thm:stabconstr}
For $\alpha > 0, \beta \in \R$, the pair $\sigma_{\alpha, \beta} := \left(\Coh^\beta X, Z_{\alpha, \beta}\right)$
defines a stability condition on $D^b(X)$ satisfying the support property. 
Moreover, the map $\R_{>0} \times \R \to \Stab(X)$ is continuous.
\end{Thm}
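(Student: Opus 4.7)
My plan is to follow Bridgeland's strategy from \cite{Bridgeland:K3}: since the only ingredients used there are the Bogomolov--Gieseker inequality and some standard homological algebra on smooth projective surfaces, the argument transfers almost verbatim to abelian surfaces, which, lacking $(-2)$-classes, are in some respects easier than K3 surfaces.

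First I would check the heart axiom. That $(\cT^\beta, \cF^\beta)$ is a torsion pair in $\Coh X$ follows from the existence of Harder--Narasimhan filtrations with respect to $\mu_{H,\beta}$, which is standard. The tilting construction of Happel--Reiten--Smal\o{} then produces $\Coh^\beta X = \langle \cF^\beta[1], \cT^\beta\rangle$ as the heart of a bounded t-structure on $\Db(X)$.

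The main content is the positivity of the central charge: for any nonzero $E \in \Coh^\beta X$, $Z_{\alpha,\beta}(E)$ should lie in the strict upper half plane or on the negative real axis. The imaginary part $\Im Z_{\alpha,\beta}(E) = \alpha H.\ch_1^\beta(E)$ is automatically non-negative by the definitions of $\cT^\beta$ and $\cF^\beta[1]$. The crux is the boundary case $\Im Z_{\alpha,\beta}(E) = 0$: using HN filtrations in $\Coh^\beta X$, I would reduce to shifted $\mu_H$-semistable sheaves. Torsion sheaves of dimension $\le 1$ are handled directly. For a torsion-free $\mu_H$-semistable sheaf $E$ with $H.\ch_1^\beta(E) = 0$, such $E$ can only enter $\Coh^\beta X$ as $E[1]$ (since $\mu_{H,\beta}(E) = 0$ is excluded from the definition of $\cT^\beta$); the Picard rank one hypothesis then forces $\ch_1^\beta(E) = 0$ itself, whence Bogomolov's inequality gives $\ch_2^\beta(E) \le 0$, and therefore
\[
\Re Z_{\alpha,\beta}(E[1]) = \ch_2^\beta(E) - \tfrac{\alpha^2}{2}H^2\ch_0(E) < 0
\]
as required.

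For the Harder--Narasimhan property I would verify that $\Coh^\beta X$ is Noetherian and that $Z_{\alpha,\beta}$ satisfies the chain conditions on subobjects; this is essentially formal once Bogomolov--Gieseker is in hand, via Bridgeland's general machinery. The support property is then verified using the quadratic form $Q(v) = \ch_1(v)^2 - 2\ch_0(v)\ch_2(v)$ on $\Halg$: Bogomolov ensures $Q \ge 0$ on Chern characters of $\sab$-semistable objects, while a direct computation shows $Q$ is negative definite on $\ker Z_{\alpha,\beta}$. Continuity of the map $\R_{>0} \times \R \to \Stab(X)$ is then formal from the continuous dependence of $Z_{\alpha,\beta}$ on $(\alpha,\beta)$ and the deformation theorem for stability conditions with the support property. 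I expect the main technical subtlety to lie in the positivity step together with Noetherianity of $\Coh^\beta X$ for irrational $\beta$, both of which genuinely depend on Bogomolov--Gieseker.
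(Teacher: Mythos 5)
The paper does not prove this statement: it is quoted verbatim from \cite[Section 15]{Bridgeland:K3}, with \cite{Emolo-Benjamin:lecture-notes} and \cite{wallcrossing-BrillNoether} cited for a sketch, so there is no in-paper argument to compare against. Judged against Bridgeland's actual proof, your outline is a faithful and essentially correct reconstruction: the tilting step, the reduction of the positivity check to $\mu_H$-semistable sheaves $E$ with $H.\ch_1^\beta(E)=0$ (where Picard rank one does force $\ch_1^\beta(E)=0$, Bogomolov then gives $\ch_2^\beta(E)\le 0$, and the strictly negative term $-\tfrac{\alpha^2}{2}H^2\ch_0(E)$ closes the boundary case), and the deformation argument for continuity are all the right ingredients, and you correctly flag Noetherianity at irrational $\beta$ as the genuinely delicate point. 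The one place where your route differs from the paper's stated justification is the support property: you assert that ``Bogomolov ensures $Q\ge 0$ on Chern characters of $\sab$-semistable objects,'' but the classical Bogomolov inequality applies to $\mu_H$-semistable \emph{sheaves}, not to Bridgeland-semistable complexes; transferring it to arbitrary $\sab$-semistable objects is a theorem in its own right (cf.\ \cite[Theorem 3.5]{BMS:stabCY3s}, proved by a wall-crossing induction, or by reduction to the large-volume limit). The paper instead obtains the needed inequality from the fact that abelian surfaces carry no rigid objects, so every stable object has $\ext^1(E,E)\ge 2$ and hence $\vv^2\ge 0$ by Riemann--Roch --- see the discussion after Theorem \ref{thm:stabconstr} and in Section \ref{subsect:modulispaces}; this is both shorter and specific to the abelian case. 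Since the conclusion holds either way, this is a gloss to be filled in rather than a gap, but if you follow the Bogomolov route you should say where the inequality for complexes comes from.
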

We refer also to \cite{Emolo-Benjamin:lecture-notes} or \cite{wallcrossing-BrillNoether} for more details and a sketch of the proof. Up to an action of the universal cover
of $\GL_2^+(\R)$, the above theorem in fact describes an entire component of
$\Stab(X)$, but that fact will be irrelevant for us.

For our purposes, Theorem \ref{thm:stabconstr} makes two statements. First, consider
the slope function
\[ \nu_{\alpha, \beta} \colon \Coh^\beta X \to \R \cup \{+\infty\}, \quad \nu_{\alpha, \beta} (E) := \begin{cases}
\frac{-\Re Z_{\alpha, \beta}(E)}{\Im Z_{\alpha, \beta}(E)} & \text{if $\Im Z_{\alpha, \beta}(E) > 0$} \\
+\infty & \text{if $\Im Z_{\alpha, \beta}(E) = 0$.}
\end{cases}
\]
This defines a notion of slope-stability in $\Coh^\beta X$: every objects admits a Harder-Narasimhan filtration into slope-semistable objects; every slope-semistable objects admits a Jordan-H\"older filtration into slope-stable objects of the same slope. 

Moreover, that these satisfy the support property just follows from the fact that on an abelian surface, the Chern character $\vv$ of any stable object satisfies $\vv^2 \ge 0$; see Section \ref{subsect:modulispaces} for a discussion of this fact.

The second statement, the continuity, implies that this family satisfies wall-crossing as $\alpha, \beta$ vary. To give an efficient description of the walls, we change our viewpoint slightly. Observe that
up to the action of $\GL_2(\R)$ on $\C \cong \R^2$, a central charge 
$Z \colon K(X) \to \C$ that factors via the Chern character \eqref{eq:ch}
is uniquely determined by its kernel $\Ker Z \subset \HalgR \cong \R^3$. 
The Mukai pairing:
\begin{align*}
& \langle(v_0,v_1,v_2),(w_0,w_1,w_2)\rangle 
:= v_1w_1-v_0w_2-v_2w_0
\end{align*}
equips $\HalgR$ with a quadratic form $Q$ of signature $(2, 1)$.
The definition of $\Zab$ in equation \eqref{eqn:Zab} identifies the upper half plane $\R_{>0} \times \R$
with the projectivisation of the negative cone of $Q$, via the correspondence $(\alpha, \beta) \mapsto \Ker \Zab$.

\begin{Prop}[{\cite[Proposition 9.3]{Bridgeland:K3}}] \label{prop:walls}
Let $\vv \in \Halg$ be a primitive algebraic cohomology class. Then there exists a collection of two-dimensional linear subspaces
$\cW_v^i \subset \HalgR \cong \R^3$ containing $\vv$ with the following properties:
\begin{itemize}
\item there exists a strictly $\sigma_{\alpha, \beta}$-semistable object with Chern character $\vv$ if and only if $\Ker \Zab$ is contained in one of the subspaces $\cW_v^i$; 
\item the intersection of these subspaces with the negative cone of $Q$ is locally finite; and
\item as $\alpha, \beta$ vary within a chamber, i.e.~without crossing a wall, (semi-)stability with respect to
$\sigma_{\alpha, \beta}$ is unchanged for objects of Chern character $\vv$.
\end{itemize}
In the case where $\vv^2 = 0$, the set of walls is empty.
\end{Prop}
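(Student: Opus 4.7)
The plan is to identify walls algebraically via the Mukai lattice and then deduce their geometric properties from the support property. First I would observe that if $E$ is strictly $\sab$-semistable with $\ch(E) = \vv$ and has a proper destabilizing sub-object $F \subset E$ in $\Coh^\beta X$ with $\ch(F) = \ww$ not proportional to $\vv$, then $\Zab(\ww)$ and $\Zab(\vv)$ are non-zero real multiples of each other. Since $\Zab$ factors through $\ch$ and $\Ker \Zab$ is a real line in $\HalgR$, this parallelism is equivalent to $\Ker \Zab \subset \mathrm{span}_\R(\vv, \ww)$. Hence each candidate wall is a two-plane $\cW_\vv^\ww := \mathrm{span}_\R(\vv, \ww)$ for some integral $\ww \in \Halg$ not proportional to $\vv$, and the first bullet reduces to showing that conversely every such parallelism is realized by an actual destabilizing sequence.

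The main step, and the one I expect to be the main obstacle, is local finiteness. Given a compact subset $K \subset \R_{>0} \times \R$, I would bound the possible destabilizers as follows. Any short exact sequence $0 \to F \to E \to G \to 0$ in $\Coh^\beta X$ with both terms $\sab$-semistable has $\ww^2 \ge 0$ and $(\vv - \ww)^2 \ge 0$ by the support property. Together with the linear constraints $0 \le \Im \Zab(\ww) \le \Im \Zab(\vv)$ coming from $F, G \in \Coh^\beta X$, these cut out a compact region in $\HalgR$ as $(\alpha, \beta)$ ranges over $K$; integrality of $\ww$ then confines it to a finite set, giving local finiteness of the walls meeting $K$. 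Chamber-constancy of stability is then formal: continuity of $\sab$ combined with the local finiteness just established means that within a chamber the sign of $\Im\bigl(\Zab(\ww)\,\overline{\Zab(\vv)}\bigr)$ remains constant for every candidate $\ww$, so Harder--Narasimhan and Jordan--H\"older filtrations cannot jump.

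Finally, for the isotropic case $\vv^2 = 0$ I would argue directly that no destabilizer $\ww$ exists. The $2$-plane $\mathrm{span}_\R(\vv, \ww)$ must contain the negative line $\Ker \Zab$, so the restriction of $Q$ has signature $(1,1)$ with $\vv$ null. Choosing an isotropic complement of $\vv$ in this plane and writing $\ww$ in the resulting coordinates, the Bogomolov inequalities $\ww^2 \ge 0$ and $(\vv - \ww)^2 \ge 0$ together with the proper-sub-object constraint $0 < \Im \Zab(\ww) < \Im \Zab(\vv)$ turn out to be mutually inconsistent unless $\ww$ is proportional to $\vv$, which was excluded. Hence no walls exist.
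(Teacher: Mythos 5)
The paper does not prove this proposition itself: it quotes it from Bridgeland's Proposition~9.3, and for the $\vv^2=0$ clause points to the general support-property argument of \cite[Proposition A.8]{BMS:stabCY3s}. Your route is the same standard one those references take, and your first paragraph (walls $\Leftrightarrow$ $\Ker\Zab\subset\mathrm{span}_\R(\vv,\ww)$, using primitivity of $\vv$ to guarantee a Jordan--H\"older factor of non-proportional class) and your treatment of the isotropic case are correct.

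There is, however, a genuine gap in the local-finiteness step as you have written it. The region
\[
\left\{\ww\in\HalgR \;:\; Q(\ww)\ge 0,\ Q(\vv-\ww)\ge 0,\ 0\le \Im\Zab(\ww)\le \Im\Zab(\vv)\right\}
\]
is \emph{not} compact. With the Mukai form $Q(\ww)=w_1^2-2w_0w_2$ and $\beta=0$, the classes $\ww=(n,w_1,-n)$ with $w_1$ fixed in the allowed slab satisfy $Q(\ww)=w_1^2+2n^2\ge 0$ and $Q(\vv-\ww)=(v_1-w_1)^2-2v_0v_2-2n(v_0-v_2)+2n^2\ge 0$ for all $n\gg 0$, while $\Im Z_{\alpha,0}(\ww)$ depends only on $w_1$; so the two Bogomolov inequalities together with the imaginary-part bounds admit an unbounded family. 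To get compactness you must also impose the condition you derived in your first paragraph, namely that at the relevant point of $K$ the destabilizer has the \emph{same phase} as $\vv$, i.e.\ $\Zab(\ww)\in\R_{>0}\cdot\Zab(\vv)$, equivalently $\ww\in\R\vv+\Ker\Zab$. On that $2$-plane, writing $\ww=t\vv+s\,n$ with $n$ spanning the $Q$-negative line $\Ker\Zab$, the imaginary parts force $0\le t\le 1$ and $Q(\ww)\ge 0$ then confines $s$ to a bounded interval, so the union over the compact set $K$ is bounded and integrality gives finiteness. A secondary point: you correctly observe that the ``if'' direction of the first bullet reduces to showing that every point of the locus $\Ker\Zab\subset\cW_v^i$ actually carries a strictly semistable object once one such point does, but you do not carry this out; it is not purely formal, since one must check that the destabilizing short exact sequence persists (stays in the heart, with both terms semistable) along the whole wall.
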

See Figure~\ref{fig:wallspicture} for a picture;
we also refer to \cite[Theorem 3.1]{Maciocia:walls} for a proof of the fact that the walls are nested when viewed as semi-circles in the upper half plane.

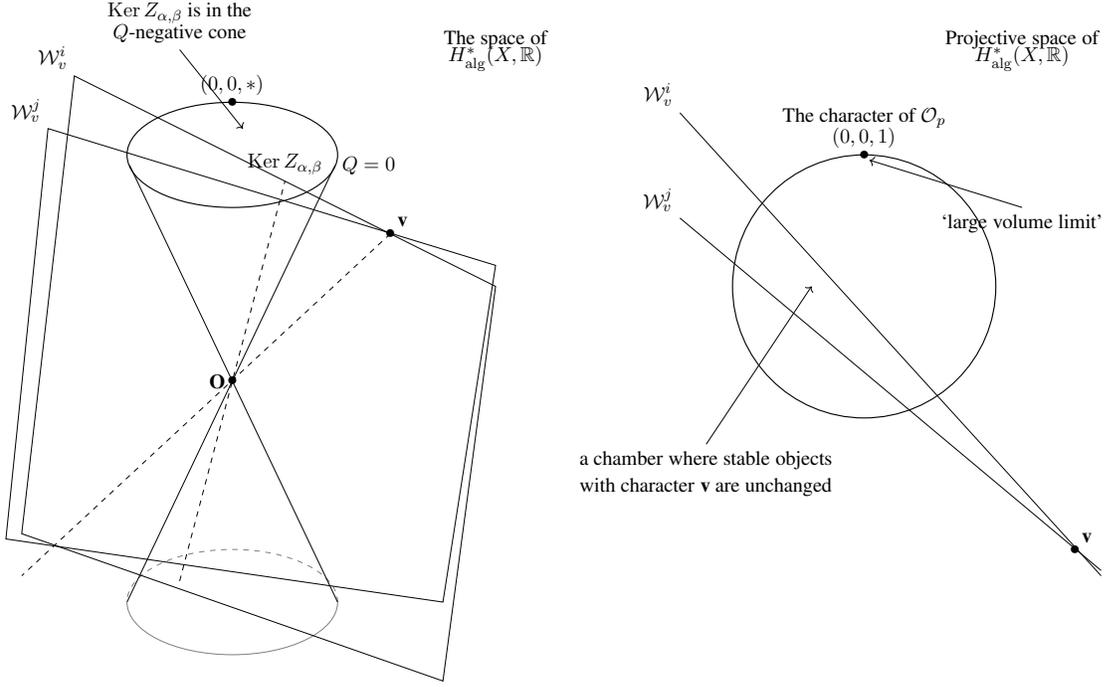
\begin{figure}[htb]
\begin{center}
\scalebox{0.7}{
\begin{tikzpicture}
       \coordinate (O) at (0,0.2);
	\draw[dashed,color=gray] (2,-4) arc (0:180:2 and 1);
	\draw[color=gray] (2,-4) arc (0:-180:2 and 1);
		\draw[semithick] (0,4.5) ellipse (2 and 1);
		\draw (1.95,4.3) node[right] {$Q=0$}--(-2,-4);
		\draw (-1.95,4.3) --(2,-4); \draw (0,5.5) node {$\bullet$} node[above] {$(0,0,*)$};
	\draw (5,6.7) node{The space of} node [below]{ $\HalgR$};
	
	\draw[->] (-1,6.5) node[above]{$Q$-negative cone} -- (0.2,5);
	\draw (-1,7.2) node{$\Ker Z_{\alpha,\beta}$ is in the};
	\draw (O) node{$\bullet$} node[left] {\textbf{O}};
	\draw (5,2) -- (-3,6) node[above left] {$\mathcal W_v^i$} -- (-4,-2.7) -- (4,-5.5)--cycle;
	\draw (5,2.4) -- (-3.5,5) node[above left] {$\mathcal W_v^j$}-- (-4.3,-2.8) -- (4,-4)--cycle;
	\draw (3,3) node {$\bullet$}  node[above right]{\textbf{v}};
	\draw[dashed] (3,3) -- (-4,-3.5);
	\draw[dashed] 
    (-1,-3.6)--(O) -- (1,4) node[above]{$\Ker Z_{\alpha,\beta}$};
	\draw (15,6.7) node{Projective space of} node [below]{ $\HalgR$};
	\draw[semithick] (12,2) ellipse (2.5 and 2.5);
	\draw[->] (15,3.5) node[below]{`large volume limit'} --(12.1,4.4);
    \draw (12,4.5) node {$\bullet$} node[above] {$(0,0,1)$};
	\draw (12,5.2) node {The character of $\cO_p$};
    
	\draw (16,-3) node {$\bullet$}  node[above right]{\textbf{v}};
	\draw (16.5,-3.5) -- (8.5,5.3) node[above left] {$\mathcal W_v^i$};
	\draw (16.5,-3.4) -- (8.5,3.3) node[above left] {$\mathcal W_v^j$};
	\draw[->]  (9,-1) node[below] {a chamber where stable objects} -- (11,2);
	\draw (9,-1.5) node[below] { with character \textbf{v} are unchanged};
\end{tikzpicture}
}
\end{center}
\caption{Describing walls via $\Ker Z_{\alpha, \beta} \subset H^*(X, \R)$} \label{fig:wallspicture}
\end{figure}

The final claim, for the case $\vv^2 = 0$, is a general fact for stability conditions satisfying the support property with respect to a given quadratic form, see e.g.~\cite[Proposition A.8]{BMS:stabCY3s}.

\subsection{Moduli spaces and large-volume limit}
\label{subsect:modulispaces}
Recall that on an abelian surface, there are no rigid objects (see \cite[Lemma 15.1]{Bridgeland:K3} for a short proof). Therefore, if the moduli space
$M_{\sab}(\vv)$ of $\sab$-stable objects of Chern character $\vv \in H^*(X, \Z)$ is non-empty, then Hirzebruch-Riemann-Roch shows $\vv^2 \ge 0$. It turns out that this necessary condition is also sufficient.
\begin{Thm}[{\cite{Yoshioka:cones, MYY2}}]
Let $v \in \Halg$ be a primitive algebraic cohomology class. If
$\alpha, \beta$ are generic, then the moduli space $M_{\sigma_{\alpha, \beta}}(\vv)$ of 
semistable objects of Chern character $\vv$ is non-empty if and only if
$\vv^2 \ge 0$. In this case it is a smooth holomorphic symplectic variety\footnote{Note that it is not an \emph{irreducible} holomorphic symplectic variety when $\vv^2 > 0$; instead it is deformation equivalent to $\Hilb^{\vv^2/2}(X) \times X$.} and has dimension $\vv^2 + 2$.
\label{thm:mukai}
\end{Thm}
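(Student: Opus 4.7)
Essentially all ingredients are in the paper. For any $\sab$-stable $E$ of class $\vv$, simplicity gives $\dim\Hom(E,E)=1$, and Serre duality with $K_X\cong\cO_X$ forces $\dim\Ext^2(E,E)=1$. Hirzebruch--Riemann--Roch on the abelian surface reads $\chi(E,E)=-\vv^2$ (using $\td(X)=1$ and the Mukai form of the paper), so $\ext^1(E,E)=\vv^2+2$. By \cite[Lemma~15.1]{Bridgeland:K3} the translation action embeds $H^1(X,\cO_X)\hookrightarrow \Ext^1(E,E)$, giving $\ext^1(E,E)\ge 2$ and thus $\vv^2\ge 0$.

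\textbf{Sufficiency.} The plan is to reduce to the classical existence theorem for $\mu_H$-stable sheaves. First, by the continuity of $(\alpha,\beta)\mapsto\sab$, I would pick parameters deep in the ``large-volume'' region; a direct analysis of the torsion pair $(\cT^\beta,\cF^\beta)$ shows that for positive-rank $\vv$, $\sab$-stability then coincides with ordinary $H$-Gieseker stability of the underlying sheaf. For $\vv=(r,c_1,s)$ with $r>0$ and $\vv^2\ge 0$, Yoshioka's theorem (see \cite{Yoshioka:FM_abelian_surfaces}) produces a $\mu_H$-stable sheaf; for pure 1-dimensional classes $\vv=(0,c_1,s)$ one constructs stable objects directly as twists of line bundles on smooth curves in $|c_1|$. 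To reach an arbitrary $\vv$ with $\vv^2\ge 0$, I would apply autoequivalences of $\Db(X)$ — shifts, line-bundle twists, and Fourier--Mukai transform with the Poincar\'e bundle — which preserve the Mukai form and intertwine $\sigma_{\alpha,\beta}$-stability with $\sigma_{\alpha',\beta'}$-stability; as in \cite{MYY2}, these suffice to transport every orbit of primitive classes with $\vv^2\ge 0$ to the Gieseker chamber of a positive-rank class. Non-emptiness then propagates across every wall of Proposition~\ref{prop:walls} because wall-crossing induces a birational map between the moduli spaces on its two sides.

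\textbf{Smoothness, dimension, and symplectic form.} At a stable $E$, the Zariski tangent space is $\Ext^1(E,E)$, of dimension $\vv^2+2$ by the computation in the first paragraph. The obstruction lies in the trace-free part $\Ext^2_0(E,E)$, dual by Serre duality to $\Hom_0(E,E)$, which vanishes by simplicity of $E$; hence $M_{\sab}(\vv)$ is smooth of the claimed dimension. Mukai's classical construction gives the holomorphic symplectic form as the composition
\[
\Ext^1(E,E)\otimes\Ext^1(E,E)\xrightarrow{\cup}\Ext^2(E,E)\xrightarrow{\mathrm{tr}}H^2(X,\cO_X)\cong\C,
\]
which is non-degenerate by Serre duality. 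The hardest step is sufficiency — specifically, showing that Bridgeland stability is compatible with the Fourier--Mukai transforms needed to reduce arbitrary chambers to the Gieseker chamber of some positive-rank class. That compatibility is precisely the technical work carried out in \cite{MYY2}.
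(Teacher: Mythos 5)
Your proposal lands in essentially the same place as the paper: the paper's proof of Theorem \ref{thm:mukai} consists of citing \cite[Theorem 1.13]{Yoshioka:cones} and \cite[Proposition 5.16]{MYY2} for $\vv^2>0$, and of handling $\vv^2=0$ by noting there are no walls (Proposition \ref{prop:walls}) and passing to the large-volume limit, where the moduli space is a classical Gieseker one. Your sketch likewise delegates the entire substantive existence statement (reduction to positive rank via Fourier--Mukai transforms, compatibility with Bridgeland stability, propagation across walls) to Yoshioka and \cite{MYY2}, and the extra detail you add on necessity, smoothness and the symplectic form is the standard Mukai/Inaba argument. So this is the same route, just written out more verbosely.

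Two of your intermediate justifications are not correct as stated, even though the conclusions are true. First, the claim that ``the translation action embeds $H^1(X,\cO_X)\hookrightarrow\Ext^1(E,E)$'' fails in general: a semihomogeneous object can have a positive-dimensional stabilizer under translation (e.g.\ a degree-zero line bundle is translation-invariant), so the Kodaira--Spencer map of that action alone can vanish. What \cite[Lemma 15.1]{Bridgeland:K3} gives, via the combined $X\times\widehat{X}$-action, is only $\Ext^1(E,E)\neq 0$. To upgrade this to $\ext^1(E,E)\ge 2$, i.e.\ $\vv^2\ge 0$, use instead that $\Ext^1(E,E)$ carries a non-degenerate alternating pairing and is therefore even-dimensional, or equivalently that $\vv^2$ is even because the intersection form on $\NS(X)$ of an abelian surface is even. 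Second, the assertion that ``non-emptiness propagates across every wall of Proposition \ref{prop:walls} because wall-crossing induces a birational map between the moduli spaces on its two sides'' is not a consequence of anything established in this paper; it is itself the main technical theorem of \cite{MYY2} that the statement is citing. Presented as a deduction it is a gap; presented as a citation (as the paper does) it is fine.
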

\begin{proof}
In the case of $\vv^2 > 0$, this is part of the statement of \cite[Theorem 1.13]{Yoshioka:cones} and \cite[Proposition 5.16]{MYY2}. (The statement ``deformation-equivalent to \dots'' in \cite[Proposition 5.16]{MYY2} in particular includes the non-emptiness.)

As for $\vv^2 = 0$, in this case there are no walls as observed in Proposition \ref{prop:walls}. Therefore, we may assume that $\sigma$ is near the large-volume limit, in which case it is well-known that the Gieseker-moduli space $M_\sigma(\vv) = M_H(\vv)$ is an abelian surface derived equivalent to $X$.
\end{proof}

Now fix such a primitive cohomology class $\vv$ of positive rank. If $\beta$ remains fixed,
and $\alpha \leadsto +\infty$, then the phase of $Z_{\alpha, \beta}(E)$ is asymptotically governed by the slope $\mu_H(E)$. The following result will refine
that observation.

We write $M_H(\vv)$ for the moduli space of Gieseker-stable sheaves with respect to the polarization $H$. Given $E\in \Db(X)$, let $E^\vee = \RlHom(E, \cO_X)$ be its derived dual, and write $\vv^\vee$ for the class dual to $\vv$. 

\begin{Prop}[{\cite[Proposition 14.2]{Bridgeland:K3}}] \label{prop:largevolume}
If $\beta < \mu_H(\vv)$ and if $\alpha$ is sufficiently big,
then $M_{\sab}(\vv) = M_H(\vv)$: an object $E \in \Coh^\beta(X)$ of Chern character $\vv$ is $\sab$-stable if an only if it is a Gieseker-stable sheaf.

If $\beta > \mu_H(\vv)$ and $\alpha$ is sufficiently big, then $M_{\sab}(-\vv) = \D(M_H(\vv^\vee))[1]$: an object $E \in \Coh^\beta(X)$ of Chern character $-\vv$ is $\sab$-stable if and only if
it is the shift $E = F^\vee[1]$ of the derived dual of a Gieseker-stable sheaf $F$ of class $\vv^\vee$.
\end{Prop}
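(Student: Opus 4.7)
The plan is to exploit the asymptotic form of the slope function $\nu_{\alpha,\beta}$ as $\alpha\to+\infty$. For a coherent sheaf $F$ of positive rank $r_F$ with $\mu_H(F)>\beta$, a direct computation from \eqref{eqn:Zab} gives the exact formula
\[
\nu_{\alpha,\beta}(F) \;=\; -\frac{\alpha}{2\bigl(\mu_H(F)-\beta\bigr)} \;+\; \frac{\ch_2^\beta(F)/r_F}{\alpha\,H^2\bigl(\mu_H(F)-\beta\bigr)}.
\]
The leading term is strictly increasing in $\mu_H(F)$, and the sub-leading term refines this by the Gieseker ordering: since $\ch_2(F)=\chi(F)$ by Hirzebruch--Riemann--Roch on the abelian surface $X$, comparing $\ch_2^\beta(F)/r_F$ at fixed $\mu_H$ is precisely the Gieseker comparison.

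For the first direction, any $E\in M_H(\vv)$ is $\mu_H$-semistable of slope $>\beta$, hence lies in $\cT^\beta\subset\Coh^\beta$. By Proposition \ref{prop:walls}, the walls for $\vv$ project to nested semicircles of bounded radius in the $(\alpha,\beta)$-plane, so for fixed $\beta$ and all $\alpha\gg 0$ the pair $(\alpha,\beta)$ lies in a single chamber above every wall. It therefore suffices to exclude a destabilizing $F\into E$ in $\Coh^\beta$ for large $\alpha$. The long exact sequence in cohomology of $0\to F\to E\to E/F\to 0$, combined with $H^{-1}(E)=0$ and $\Hom(\cF^\beta,\cT^\beta)=0$, forces both $F$ and $E/F$ to be sheaves. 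The formula above then leaves only two options for a destabilizing $F$: either $\mu_H(F)>\mu_H(E)$, contradicting $\mu_H$-semistability of $E$, or $\mu_H(F)=\mu_H(E)$ with $\ch_2(F)/r_F\ge\ch_2(E)/r_E$, contradicting Gieseker-stability.

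Conversely, let $E\in\Coh^\beta$ be $\sab$-stable with $\ch(E)=\vv$ and $\alpha\gg 0$. If $H^{-1}(E)\neq 0$, the triangle $H^{-1}(E)[1]\to E\to H^0(E)$ exhibits $H^{-1}(E)[1]\into E$ in $\Coh^\beta$; since $H^{-1}(E)\in\cF^\beta$, one checks that $\Im Z_{\alpha,\beta}(H^{-1}(E)[1])\ge 0$ and $\nu_{\alpha,\beta}(H^{-1}(E)[1])\to+\infty$ as $\alpha\to+\infty$ (or equals $+\infty$ when $\Im Z=0$), destabilizing $E$. Hence $E$ is a sheaf; any torsion subsheaf has $\nu=+\infty$, so $E$ is torsion-free; and reversing the asymptotic comparison from the previous step gives Gieseker-stability.

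The second statement follows by duality. The contravariant functor $\D(-)[1]:=\RlHom(-,\cO_X)[1]$ is an involution on $\Db(X)$ exchanging the hearts $\Coh^\beta$ and $\Coh^{-\beta}$, and a direct Chern-character computation yields $Z_{\alpha,\beta}\bigl(\D(F)[1]\bigr)=-\overline{Z_{\alpha,-\beta}(F)}$. Contravariance swaps sub- with quotient objects, and the sign/conjugation in $Z$ reverses slope inequalities; the two effects combine so that $\D(-)[1]$ identifies $\sigma_{\alpha,-\beta}$-stable objects of class $\vv^\vee$ with $\sigma_{\alpha,\beta}$-stable objects of class $-\vv$. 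Since $\beta>\mu_H(\vv)$ gives $-\beta<\mu_H(\vv^\vee)$, the first part applied to $\vv^\vee$ at $-\beta$ identifies $M_{\sigma_{\alpha,-\beta}}(\vv^\vee)=M_H(\vv^\vee)$, and the claim follows. The main technical obstacle throughout is uniformity in $\alpha$: one must know that no new destabilizing subobject persists as $\alpha\to+\infty$, which reduces to the bounded-radius statement for walls in Proposition \ref{prop:walls}, itself a consequence of the support property in Theorem \ref{thm:stabconstr}.
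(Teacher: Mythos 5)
The paper does not actually prove this proposition; it is quoted verbatim from \cite[Proposition 14.2]{Bridgeland:K3}, so there is no internal argument to compare against. Your reconstruction follows the standard large-volume-limit strategy and your asymptotic formula for $\nu_{\alpha,\beta}$, the identification $\ch_2=\chi$ on an abelian surface, and the duality computation $Z_{\alpha,\beta}(\D(F)[1])=-\overline{Z_{\alpha,-\beta}(F)}$ are all correct. However, there are two genuine gaps.

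First, the torsion-pair vanishing you invoke is backwards: the definition of the tilted heart gives $\Hom(\cT^\beta,\cF^\beta)=0$, not $\Hom(\cF^\beta,\cT^\beta)=0$ (maps from small slope to large slope are not obstructed). Consequently your claim that $E/F$ is a sheaf fails: for a subobject $F\into E$ in $\Coh^\beta$ of a sheaf $E$, the long exact sequence only gives $H^{-1}(F)=0$, while $H^{-1}(E/F)=\ker(F\to E)$ can be any nonzero object of $\cF^\beta$ mapping into $H^0(F)$. So a destabilizing $F$ need not be a subsheaf of $E$, and you cannot directly quote $\mu_H$-semistability or Gieseker-stability of $E$ against it. The fix is short but must be said: writing $K=H^{-1}(E/F)\in\cF^\beta$ and $F'=F/K\subset E$, one has $\mu_H(K)\le\beta<\mu_H(F)\le\mu_H(F')\le\mu_H(E)$, with the middle inequality strict unless $K=0$; this rules out $\mu_H(F)>\mu_H(E)$ and reduces the equality case to genuine subsheaves, where the Gieseker comparison applies.

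Second, the uniformity in $\alpha$ is not supplied by Proposition \ref{prop:walls}, which only asserts \emph{local} finiteness of walls in the negative cone; nestedness of the semicircles does not by itself prevent the vertical ray $\{\beta\}\times\R_{>0}$ from meeting infinitely many walls, i.e.\ it does not produce an unbounded Gieseker chamber. The missing input is a boundedness statement: the saturated subsheaves of members of the bounded family $M_H(\vv)$ whose slope is within a fixed bound of $\mu_H(\vv)$ form a bounded family, hence realize only finitely many numerical classes, hence only finitely many walls cross the ray. This boundedness is in fact the main technical content of Bridgeland's proof, so attributing it to the wall-and-chamber structure (which is established downstream of statements like this one) is close to circular. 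With these two points repaired, your argument is a correct proof of the cited result.
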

In other words, there is a Gieseker-chamber in which Bridgeland stability and Gieseker stability coincide for objects of Chern character $\vv$; and similarly there is a dual Gieseker chamber in which they coincide up to taking derived duals.

We also make the following observation for non-primitive classes of square zero:

\begin{Prop} \label{prop:mv0square0}
Let $\vv = m \vv_0$ with $m > 1$ and $\vv_0^2 = 0$. Then every $\sigma$-semistable object of class $\vv$ is strictly semistable, and all its Jordan-H\"older factors are of class $\vv_0$. 
\end{Prop}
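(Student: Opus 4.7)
The plan is to analyze the Jordan-H\"older filtration of a $\sigma$-semistable object $E$ of class $\vv = m\vv_0$. In a first step, a lattice argument will show that every stable factor has class a positive integer multiple of $\vv_0$; in a second step, the Fourier-Mukai equivalence associated to $M_\sigma(\vv_0)$ will rule out multiplicities greater than one.

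For the first step, take a Jordan-H\"older filtration of $E$ with stable factors $A_i$ of classes $\vv_i$ satisfying $\sum \vv_i = m\vv_0$ and each $Z_{\alpha,\beta}(\vv_i)$ a positive real multiple of $Z_{\alpha,\beta}(\vv_0)$. Writing $\vv_i = c_i \vv_0 + w_i$ with $c_i > 0$ real and $w_i \in \ker Z_{\alpha,\beta} \subset \HalgR$, the assumption $\alpha > 0$ places $\ker Z_{\alpha,\beta}$ in the open negative cone of the Mukai form $Q$, so $w_i^2 \le 0$ with equality iff $w_i = 0$. Combined with $\vv_i^2 \ge 0$ (which holds since the abelian surface $X$ admits no rigid objects, as used in Theorem \ref{thm:mukai}), this forces $\langle \vv_0, w_i \rangle \ge 0$, again with equality iff $w_i = 0$. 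Since $\vv_0 \notin \ker Z_{\alpha,\beta}$, matching components in $\sum \vv_i = m\vv_0$ yields $\sum w_i = 0$; pairing against $\vv_0$ then forces every $w_i = 0$. As $\vv_0$ is primitive and each $\vv_i$ lies in $\Halg$, the $c_i$ are positive integers with $\sum c_i = m$.

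For the second step, it suffices to rule out a $\sigma$-stable object $F$ of class $c\vv_0$ with $c \ge 2$. For any $B \in Y := M_\sigma(\vv_0)$ (non-empty by Theorem \ref{thm:mukai}), both $B$ and $F$ are stable of the same phase but of different classes, so $\Hom(B,F) = \Hom(F,B) = 0$; Serre duality (trivial canonical on $X$) gives $\Ext^2(B,F) = 0$, and the identity $\chi(B,F) = -\langle \vv_0, c\vv_0 \rangle = 0$ then forces $\Ext^1(B,F) = 0$. Thus $\Ext^\bullet(B,F) = 0$ for every $B \in Y$. Applying the (possibly twisted) Fourier-Mukai equivalence $\Db(Y,\alpha) \simeq \Db(X)$ provided by the universal family on $X \times Y$, this total vanishing forces $F = 0$, a contradiction. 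Since the only remaining possibility is $c_i = 1$ for all $i$ and $m \ge 2$, the object $E$ must be strictly semistable with all JH factors of class $\vv_0$.

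The main obstacle is justifying this final Fourier-Mukai step: that $Y = M_\sigma(\vv_0)$ is (possibly twisted) derived-equivalent to $X$ and that $\{B\}_{B \in Y}$ constitutes a spanning class of $\Db(X)$. This is classical for Gieseker stability by Mukai's original work and has been extended to Bridgeland-stable moduli of primitive square-zero classes in the references cited in Theorem \ref{thm:mukai}; I will appeal to this rather than re-prove it.
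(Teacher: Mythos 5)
Your proof is correct, and it reaches the conclusion by a genuinely different route from the paper's. The paper invokes the quadratic-form argument of \cite{BMS:stabCY3s} only to conclude that there are no walls for $\vv$, moves $\sigma$ into the Gieseker chamber, and then transports the whole problem through the Mukai equivalence with $M_\sigma(\vv_0)$: objects of class $\vv_0$ become skyscraper sheaves and semistable objects of class $m\vv_0$ become length-$m$ zero-dimensional sheaves, for which strict semistability and the shape of the JH factors are immediate. You instead run the quadratic-form argument directly on the Jordan--H\"older factors at the given $\sigma$ (writing $\vv_i = c_i\vv_0 + w_i$ with $w_i$ in the negative-definite line $\Ker Z_{\alpha,\beta}$), which correctly pins each factor class to a positive integer multiple of $\vv_0$, and then you exclude stable objects of class $c\vv_0$ with $c\ge 2$ by the orthogonality computation $\Ext^\bullet(B,F)=0$ for all $B\in M_\sigma(\vv_0)$ together with the spanning-class property of skyscrapers under the same (possibly twisted) equivalence. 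Both proofs rest on the same deep input --- the derived equivalence of $X$ with the two-dimensional moduli space $M_\sigma(\vv_0)$ --- but yours stays on $X$ and makes the lattice step explicit, at the cost of a slightly longer argument; the paper's is shorter because on the dual side the claim is visibly a statement about torsion sheaves of length $m$. Two points worth making explicit in your write-up: the statement implicitly requires $\vv_0$ to be primitive (you use this to get $c_i\in\Z$, and the conclusion is false otherwise), and the identification of $M_\sigma(\vv_0)$ with the classical Gieseker moduli space, needed to quote Mukai's equivalence and the existence of the (twisted) universal family, is supplied by the absence of walls for square-zero classes in Proposition \ref{prop:walls} --- the same reduction the paper performs at the outset.
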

In particular, such an object has a stable quotient and a stable subobject of the same phase, each of class $\vv_0$.
\begin{proof}
By the same argument as in the proof of \cite[Proposition A.8]{BMS:stabCY3s}, the set of semistable objects of class $\vv$ is constant as $\sigma$ varies. Hence we can assume $\sigma$ is in the Gieseker-chamber for $\vv_0$ and $\vv$. The moduli space $M_\sigma(\vv_0)$ is an abelian surface derived equivalent to $X$ (up to a Brauer twist). The associated Mukai transform sends objects
in $M_{\sigma}(\vv_0)$ to skyscraper sheaves of points, and objects
in $M_{\sigma}(\vv)$ to 0-dimensional torsion sheaves of length $m$. This proves the claim.
\end{proof}

\def\hatX{\widehat{X}}

Finally, we need the construction of associated Kummer varieties. Let $\widehat X := \Pic^0(X)$ be the dual abelian variety, and let $\cP$ be the Poincar\'e line bundle on $X \times \widehat{X}$. Let $\Phi^\cP \colon \Db(X) \to \Db(\hatX)$ be the associated Fourier-Mukai transform. Given $E \in \Db(X)$, let
$\det(E)$ be the associated determined line bundle. Now consider a moduli space $M_\sigma(\vv)$ with chosen basepoint $E_0$. We obtain a map
\begin{eqnarray}
& \underline{\det}  \colon   & M_\sigma(\vv) \to \hatX, \quad E \mapsto \det(E) \otimes \det(E_0)^\vee, \quad \text{and, dually,} \label{eq:defdet} \\
& \widehat{\underline{\det}} \colon  & M_\sigma(\vv) \to X, \quad E \mapsto \det(\Phi^\cP E) \otimes \det(\Phi^\cP E_0)^\vee.
\end{eqnarray}

\begin{Prop}[Yoshioka] \label{prop:albanese}
Assume that $\vv^2 \ge 2$ is primitive. Then the Albanese map of $M_\sigma(\vv)$ is given by 
\[ \underline{\det} \times \widehat{\underline{\det}} \colon M_\sigma(\vv) \to X \times \hatX, \]
and it has connected fibers. 

If $\vv^2 = 0$ and with $\vv$ primitive, then $M_\sigma(\vv)$ is an abelian surface. If moreover the rank $\rk(\vv)$ is positive, then 
$\underline{\det} \colon M_\sigma(\vv) \to \hat X$ is a finite map of degree $\rk(\vv)^2$. 
\end{Prop}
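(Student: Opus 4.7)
The plan is to reduce both parts of the proposition to results of Yoshioka on moduli of Gieseker-stable sheaves on abelian surfaces, via wall-crossing and a Fourier--Mukai equivalence.

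For the first claim, I would begin by invoking wall-crossing (as developed in \cite{MYY2}): any two moduli spaces $M_\sigma(\vv)$ and $M_{\sigma'}(\vv)$ for generic $\sigma, \sigma'$ are related by a birational map induced by an autoequivalence of $\Db(X)$, and the final statement is invariant (up to swapping the two factors of $X \times \hatX$) under such equivalences. Therefore we may assume that $\sigma$ lies in the Gieseker-chamber for the image $\vv'$ of $\vv$ under a suitable Fourier--Mukai functor $\Phi \colon \Db(X) \to \Db(Y)$ to a derived-equivalent abelian surface $Y$, so that $M_\sigma(\vv) \cong M_H(\vv')$ becomes a classical Gieseker moduli space. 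In this setting, the description of the Albanese map as $\underline{\det} \times \widehat{\underline{\det}}$ with connected fibers is exactly the content of Yoshioka's theorem in \cite{Yoshioka:FM_abelian_surfaces}, which can then be cited directly.

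For the second claim, Theorem \ref{thm:mukai} together with the footnote after it already implies that, when $\vv$ is primitive with $\vv^2 = 0$, $M_\sigma(\vv)$ is deformation-equivalent to $\Hilb^{0}(X) \times X = X$, hence is itself an abelian surface (it is smooth, symplectic, and two-dimensional, and the Fourier--Mukai construction of \cite{Yoshioka:FM_abelian_surfaces} endows it with a group structure). For the degree calculation when $r := \rk(\vv) > 0$, I would again pass to the Gieseker chamber, after which the statement becomes the classical fact that the determinant map $\underline{\det} \colon M_H(\vv) \to \widehat{Y}$ of a moduli space of rank-$r$ Gieseker-stable sheaves of square-zero Mukai vector on an abelian surface $Y$ is a finite isogeny of degree $r^2$. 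This is again a known result of Yoshioka, obtained by identifying the kernel of $\underline{\det}$ with a Fourier--Mukai partner of $Y$ and computing intersection numbers on the total family.

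The main obstacle will be the bookkeeping of how the Fourier--Mukai equivalence $\Phi$ transforms the determinant and dual-determinant maps. Since $\Phi^\cP$ may interchange the roles of $X$ and $\hatX$ and may introduce a Brauer twist, one must verify that the final descriptions of the Albanese map and of the degree $r^2$ are intrinsic to $M_\sigma(\vv)$ and independent of the chamber we reduced to. Fortunately, the expected target $X \times \hatX$ is symmetric under the involution $X \leftrightarrow \hatX$ induced by $\Phi^\cP$, and the integer $r^2 = \rk(\vv)^2$ depends only on $\vv$, so the required checks reduce to compatibility of Chern characters under $\Phi$ rather than any new geometric input.
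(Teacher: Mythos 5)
Your treatment of the first claim is essentially the paper's: the paper cites \cite[Proposition 5.16, (2a)]{MYY2} directly for $\vv^2 \ge 6$, and for the remaining cases $\vv^2 = 2, 4$ it cites Yoshioka's results for Gieseker moduli (\cite[Section 3]{Yoshioka:albanese}, \cite[Corollary 4.3]{Yoshioka:some_notes}) and then observes that the Albanese description is preserved under wall-crossing by the same argument as in \cite{MYY2} --- which is your reduction, except that a single blanket citation does not suffice and must be split into these cases because the MYY statement only covers $\vv^2 \ge 6$. Where you genuinely diverge is the degree computation for $\vv^2 = 0$: you outsource it to an unnamed theorem of Yoshioka with only a sketch (``identifying the kernel of $\underline{\det}$ with a Fourier--Mukai partner and computing intersection numbers''), which as written is an appeal to a reference you would still need to locate and then transport through the Brauer twist and the $X \leftrightarrow \hatX$ swap that you yourself flag as the main obstacle. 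The paper instead gives a short self-contained argument from Mukai's theory of semi-homogeneous bundles: $M_\sigma(\vv)$ is (up to shift) a moduli space of semi-homogeneous vector bundles, so for $E \in M_\sigma(\vv)$ the map $\Pi \colon \hatX \to M_\sigma(\vv)$, $L \mapsto E \otimes L$, is surjective of degree $\rk(\vv)^2$ by \cite[Propositions 6.10 and 7.1]{Mukai:semi-homogeneous}; since $\det(E \otimes L) = \det(E) \otimes L^{\otimes \rk(\vv)}$, the composite $\underline{\det} \circ \Pi$ is multiplication by $\rk(\vv)$ on $\hatX$, hence of degree $\rk(\vv)^4$, and dividing gives $\deg \underline{\det} = \rk(\vv)^2$. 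This buys a verifiable two-line computation performed entirely on $M_\sigma(\vv)$ itself, with no chamber reduction or Fourier--Mukai bookkeeping needed for this part; I recommend adopting it in place of the citation.
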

\begin{proof}
For $\vv^2 \ge 6$, this is part of the statement of \cite[Proposition 5.16, (2a)]{MYY2}. For $\vv^2 = 4$ and $\vv^2 = 2$ this is proven for moduli spaces of Gieseker-stable sheaves in
\cite[Section 3]{Yoshioka:albanese} and \cite[Corollary 4.3]{Yoshioka:some_notes}. This property remains preserved under wall-crossing, and thus holds for any $M_{\sigma}(\vv)$, with the same proof as the one given in \cite{MYY2} for $\vv^2 \ge 6$.

In case $\vv^2 = 0$, then $M_\sigma(\vv)$ is (up to shift) a moduli space of semihomogeneous vector bundles. In particular, for $E \in M_\sigma(\vv)$ there is a map $\Pi \colon \hatX \to M_\sigma(\vv)$, $L \mapsto E \otimes L$ that is surjective \cite[Proposition 6.10]{Mukai:semi-homogeneous} of degree $\rk(\vv)^2$ \cite[Proposition 7.1]{Mukai:semi-homogeneous}. Since the composition $\underline{\det} \circ \Pi$ is multiplication by $\rk(\vv)$, and thus of degree
$\rk(\vv)^4$, the claim follows. 
\end{proof}
If in fact $\vv^2 \ge 4$, then the fibers of the Albanese map are irreducible holomorphic symplectic varieties (or K3 surfaces) of dimension $\vv^2 - 2$, but we will not need that fact.

\section{The Gieseker-wall}

The Gieseker chamber described by Proposition \ref{prop:largevolume} typically has a wall corresponding to the Gieseker-Uhlenbeck contraction, corresponding to the subspace spanned by $\vv$ and $(0, 0, 1)$.
In this section, we will describe its second wall for 
classes $\vv$ with $c_1(\vv) = H$ and $\chi(\vv) < 0$. 

\begin{Prop} \label{prop:gwall}
Let $\vv = (r, H, \chi) \in \Halg$ with $r \ge 0$.
For any $\alpha >0$ and $\beta = 0$, we have
$M_{\sigma_{\alpha, 0}}(\vv) = M_H(\vv)$. Similarly, all line bundles $L \in \Pic^0(X)$ of degree 0 are $\sigma_{\alpha, 0}$-stable.
\end{Prop}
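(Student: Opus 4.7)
The plan is to reduce the claim to the absence of walls for the two Chern characters involved, using the description of walls in Proposition \ref{prop:walls} and the large-volume identification in Proposition \ref{prop:largevolume}.

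For the line bundle claim, I would observe that any $L\in \Pic^0(X)$ has Chern character $\vv_0 = (1,0,0)$ with $\vv_0^2 = 0$. By the last sentence of Proposition \ref{prop:walls}, the set of walls for $\vv_0$ is empty, so $\sigma$-stability is constant throughout the connected component of stability conditions under consideration. Since $L$ is a simple Gieseker-stable sheaf, Proposition \ref{prop:largevolume} (applied with $\beta<0$ and $\alpha$ large) identifies $L$ with a point of $M_{\sab}(\vv_0)$; combined with the absence of walls, $L$ is $\sigma_{\alpha,0}$-stable for every $\alpha>0$.

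For the first claim, fix $\vv = (r,H,\chi)$ with $r\ge 0$; note that $\beta=0 < \mu_H(\vv)$ since $c_1(\vv)=H \neq 0$, so Proposition \ref{prop:largevolume} gives $M_{\sigma_{\alpha,0}}(\vv) = M_H(\vv)$ for $\alpha \gg 0$. I would then show that the vertical ray $\{(\alpha, 0) : \alpha > 0\}$ crosses no wall for $\vv$; combined with local finiteness and the constancy of stability within a chamber (Proposition \ref{prop:walls}), this extends the identification to all $\alpha > 0$. Concretely, a wall passing through a point $(\alpha,0)$ would be a rank-two subspace $W \subset \HalgR$ containing both $\vv$ and $\Ker Z_{\alpha,0}$, and would carry an integral class $\ch(F) = \ww \in \Halg$ appearing as the Chern character of a destabilizing sub- or quotient-object. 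Such an $F \in \Coh^0(X)$ would satisfy
\[
0 \;<\; \Im Z_{\alpha,0}(F) \;=\; \alpha\, H.\ch_1(F) \;<\; \alpha H^2 \;=\; \Im Z_{\alpha,0}(\vv).
\]

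The main (very small) obstacle is to rule out this strict inequality. Under Picard rank one we have $\ch_1(F) \in \Z\cdot H$, so $H.\ch_1(F) \in H^2\cdot \Z$, and no integer multiple of $H^2$ lies strictly between $0$ and $H^2$; this contradiction shows there is no wall on the ray, and the proof is complete. (In the more general setting of Assumption (*), the same divisibility $H^2 \mid H.\ch_1(F)$ yields the identical conclusion, which is exactly what will be needed in Section \ref{sect:moregeneral}.)
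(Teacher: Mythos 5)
Your proof is correct and follows essentially the same route as the paper: the key point in both is that $\Im Z_{\alpha,0}$ takes values in $\Z_{\ge 0}\cdot\alpha H^2$ on $\Coh^0 X$, so no class can have imaginary part strictly between $0$ and $\alpha H^2$, which rules out any wall for $\vv$ along the ray $\beta=0$; the paper phrases this via Jordan--H\"older factors of a hypothetically strictly semistable object rather than via wall classes, but the content is identical. The line-bundle claim is handled in both cases by the absence of walls for the square-zero class $(1,0,0)$ together with Proposition \ref{prop:largevolume}.
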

\begin{proof}
Observe that 
\begin{eqnarray*}
\Im Z_{\alpha, 0}(E)
	& \in & \Z_{\ge 0} \cdot \alpha H^2 \quad \text{for all $E \in \Coh^\beta X$, and} \\
\Im Z_{\alpha, 0}(\vv) & = & \alpha H^2.
\end{eqnarray*}
Therefore, an object $E \in \Coh^0 X$ with $\ch(E) = \vv$ can never by strictly $\sigma_{\alpha, 0}$-semistable: its Jordan-H\"older factors $E_i$ would satisfy
$\Im Z_{\alpha, 0}(E_i) \in (0, \alpha H^2)$ in contradiction to the first equation. Combined with Propositions \ref{prop:walls} and \ref{prop:largevolume}, this proves the first claim.

The stability of $L \in \Pic^0(X)$ immediately follows from the last claim of Proposition \ref{prop:walls}, again combined with Proposition \ref{prop:largevolume}.
\end{proof}

\begin{figure}[htb]
\begin{center}
\tikzset{%
    add/.style args={#1 and #2}{
        to path={%
 ($(\tikztostart)!-#1!(\tikztotarget)$)--($(\tikztotarget)!-#2!(\tikztostart)$)%
  \tikztonodes},add/.default={.2 and .2}}
}
\scalebox{0.7}{
\begin{tikzpicture}      
         \coordinate (O) at (0,0);
       \draw (O) node {$\bullet$} node[below] {$\cO_X$};
       
       \coordinate (P) at (0,6);
       \draw (P) node {$\bullet$} node[above right] {`large volume limit'};
       
       \coordinate (V) at (1,-0.7);
       \draw (V) node {$\bullet$} node[below] {$\vv$};
       
       \coordinate (B) at (0,1.4);
       \draw (B) node {$\bullet$} ;
       \draw[->] (4,2.5) node[right] {$M_H(\vv)=M_{\sigma_{\alpha,0}}(\vv)$} to (0.1,1.5);
       
	\draw[semithick] (0,3) ellipse (3 and 3);
	
        \draw [add= 1.5 and 9] (V) to (O) node[above] {The wall $W_\chi$};
        \draw [add= -4.0 and 3.1] (V) to (O) node {$\bullet$} node [below left] {$(-R,H,\chi)$};
        \draw [add= -1.2 and 1.5] (V) to (O) node {$\bullet$} node [below] {$\sigma_0$};
	\coordinate (SP) at (-1.2, 1.2);
	\draw (SP) node {$\bullet$} node[above] {$\sigma_+$};
        
        \draw [add= 0 and 0.3] (V) to (P) node[above] {Gieseker-Uhlenbeck wall};       
        \draw [add= 0 and 1.1, dotted] (V) to (B) node {$\bullet$} node[left] {$M_H(\vv)$} node[below left] {$=M_{\sigma}(\vv)$};
       \draw [add= -2.1 and 3.4, dotted] (V) to (B) node[above] {A ray inside the Gieseker-chamber
for $\vv$};
        
        \draw [dashed] (O) to (P);
         \draw [->] (4,4) node [above right]{the line of } node[below right]{$\alpha>0, \beta=0$} to (0.1,3);
         \draw [add= 3 and 7.6, dashed] (P) to (-1,6) node {$\bullet$} node [below left] {$(0,H,\chi)$};
         \draw [add= -8.6 and 8.6, dashed] (P) to (-1,6);
\end{tikzpicture}
}
\end{center}
\caption{The space $M_{\sigma}(\vv)$ is unchanged in the chamber bounded by walls of $\overline{\vv\cO_p}$ and $\overline{\vv\cO_X}$.} \label{fig:firstwall}
\end{figure}

The line connecting $\vv$ with $\Ker Z_{0, 0}= \R \cdot (1, 0, 0)$ is therefore the first possibility for a second wall of the Gieseker-chamber for $\vv$. For $\chi < 0$, this wall does exist and will be described in the following.

\begin{Def} \label{def:wk}
\begin{enumerate}
\item
Given $\chi<0$, let $R$ be the number
\[
R := \max\left\{r\in \mathbb Z_{\geq 0}\; \middle|\; (-r,H,\chi)^2=H^2+2r\chi\geq 0\right\}
= \left\lfloor \frac{H^2}{-2\chi} \right\rfloor .
\]
We denote $w_0$ by the character $(-R,H,\chi)\in \Halg$, and $w_k=w_0+(k,0,0)$ for $k\in \Z_{\geq 0}$. 
\item
Let $\cW_\chi \subset \HalgR \cong \R^3$ be the  two-dimensional subspace spanned by $(1,0,0)$ and $w_0$ (and containing all $w_k$). Denote by $\sigma_0=\left(\Coh^\beta X, Z_{\alpha, \beta}\right)$  a stability condition on the wall corresponding to $\cW_\chi$: it is given as $\sigma_{\alpha_0, \beta_0}$ as in Theorem \ref{thm:stabconstr} for any $\alpha_0, \beta_0$ with $\Ker Z_{\alpha_0, \beta_0} \subset \cW_\chi$.
\item Finally, let $\cP_0$ denote the category of $\sigma_0$-semistable objects in $\Coh^{\beta_0} X$ of the same slope $\nu_0$ as objects of character $w_k$ (for any $k \ge 0$). The category $\cP_0$ does not depend on the choice of $\sigma_0$ on $\cW_\chi$.
\end{enumerate}
\end{Def}
Given an object $E \in \cP_0$, its Chern character is a linear combination 
\begin{equation} \label{eq:chElincomb}
\ch(E) = a(1, 0, 0) + b w_0.\end{equation}
Since $H$ generates the Picard group, $b$ has to be integral, and therefore $a$ as well.
Moreover, $Z_{\alpha_0, \beta_0}(E)$ has to lie on the same ray as $Z_{\alpha_0, \beta_0}(w_k)$; combined with $\ch(E)^2 \ge 0$ this is only possible if $b \ge 0$. This leads to the following observation:

\begin{Lem}
Let  $E \in \cP_0$ be a $\sigma_0$-semistable object with $\ch(E) = w_k$. Then for each of its Jordan-H\"older factor $E_i$, the Chern character  is either given by $\ch(E_i) = w_{k'}$ for some $0 \le k' \leq k$, or by $\ch(E_i) = (1,0,0)$. Moreover, $\cW_\chi$ is a wall for the Chern character $w_k$ for all $k > 0$.
\label{Owall}
\end{Lem}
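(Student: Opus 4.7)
The plan is to treat the two assertions separately. For the Jordan--H\"older factor classification, I combine the observed constraint that JH characters lie on $\cW_\chi$ with arithmetic and a square-zero rigidity argument via Proposition \ref{prop:mv0square0}. For the wall claim, I exhibit an explicit strictly $\sigma_0$-semistable object of class $w_k$ as a direct sum.

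For the classification, let $E_1, \ldots, E_n$ be the JH factors of $E$ in $\cP_0$. As noted just before the lemma, each $\ch(E_i) = a_i(1, 0, 0) + b_i w_0$ with $a_i, b_i \in \Z$ and $b_i \ge 0$. Comparing with $\ch(E) = w_k = k(1,0,0) + w_0$ gives $\sum_i a_i = k$ and $\sum_i b_i = 1$, so exactly one factor, say $E_j$, has $b_j = 1$, while every other factor satisfies $\ch(E_i) = a_i(1, 0, 0)$. Since $(1, 0, 0)$ is primitive with square zero, Proposition \ref{prop:mv0square0} forces $a_i = 1$ for these factors, so they are degree-zero line bundles. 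For $E_j$, write $\ch(E_j) = w_{a_j}$; the upper bound $a_j \le k$ is automatic, while $a_j \ge 0$ follows from $w_{a_j}^2 = H^2 + 2(R - a_j)\chi \ge 0$ (required by Theorem \ref{thm:mukai} for nonemptiness of the moduli containing $E_j$) together with the maximality in the definition of $R$.

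For the wall statement, fix $k \ge 1$ and choose $\sigma_0$ with $\beta_0 \in (-1/R, 0)$, so that both $\cO_X$ and a suitable representative of class $w_0$ lie in $\Coh^{\beta_0} X$. The line bundle $\cO_X$ is $\sigma_0$-stable, by Proposition \ref{prop:gwall} combined with the absence of walls for the square-zero class $(1,0,0)$ (last clause of Proposition \ref{prop:walls}). I claim there also exists a $\sigma_0$-semistable object $F_0$ of class $w_0$. Indeed, $w_0$ is primitive with $w_0^2 \ge 0$, so Theorem \ref{thm:mukai}, combined with the derived-duality description of Proposition \ref{prop:largevolume} (needed because $w_0$ has negative rank), supplies a $\sigma$-stable object of class $w_0$ for some generic $\sigma$. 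Moreover, $\cW_\chi$ cannot be a wall for $w_0$: by the first part of the lemma applied with $k = 0$, the JH factors of any $\sigma_0$-semistable object of class $w_0$ would satisfy $\sum a_i = 0$, $\sum b_i = 1$, with $a_i \ge 1$ for the $b_i = 0$ factors and $a_j \ge 0$ for the unique $b_j = 1$ factor, forcing a single JH factor. So $F_0$ is $\sigma_0$-semistable, and $\cO_X^{\oplus k} \oplus F_0 \in \cP_0$ is strictly $\sigma_0$-semistable of class $w_k$; hence $\cW_\chi$ is a wall for $w_k$ by Proposition \ref{prop:walls}.

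The main technical obstacle is establishing the existence of $F_0$ of class $w_0$ as a $\sigma_0$-semistable object: the negative rank of $w_0$ means $F_0$ is a shift of a derived dual, requiring careful choice of heart (hence the constraint $\beta_0 \in (-1/R, 0)$), and the absence of walls for $w_0$ at $\cW_\chi$ relies on the bootstrap with the first part of the lemma. The case $w_0^2 = 0$ would be immediate via the final clause of Proposition \ref{prop:walls}.
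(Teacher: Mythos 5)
Your proof is correct. The Jordan--H\"older classification is essentially the paper's argument --- the same decomposition $\ch(E_i)=a_i(1,0,0)+b_iw_0$, the same use of $\sum_i b_i=1$ to force $b_i\le 1$, and the same appeal to Proposition \ref{prop:mv0square0} for the $b_i=0$ factors --- and you are in fact more careful than the paper about the bound $0\le k'\le k$ (the paper does not justify $k'\ge0$, which, as you say, follows from $\ch(E_j)^2\ge0$ together with the maximality in the definition of $R$). The wall statement is where you diverge: the paper exhibits $E'\oplus\cO_X$ with $E'$ a stable object of class $w_{k-1}$, which exists by Theorem \ref{thm:mukai} since $w_{k-1}^2\ge w_0^2\ge0$, whereas you use $\cO_X^{\oplus k}\oplus F_0$ with $F_0$ of class $w_0$. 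Both witnesses are valid, but your choice obliges you to prove the auxiliary claim that $\cW_\chi$ is not a wall for $w_0$ (via the bootstrap with the first part of the lemma); this is more than is needed, since strict semistability of the direct sum only requires $F_0$ to be $\sigma_0$-\emph{semistable}, and any object that is stable for a nearby generic stability condition is automatically $\sigma_0$-semistable by closedness of semistability as one approaches the wall --- the same implicit step the paper uses for $E'$. Finally, the side condition $\beta_0\in(-1/R,0)$ is not something you are free to impose ($\sigma_0$ is fixed on the wall in Definition \ref{def:wk}, and $\cP_0$ is independent of the choice), and it is ill-posed when $R=0$; but nothing in your argument actually depends on it, since the alignment of the phases of $\cO_X$ and of objects of class $w_0$ is already built into the definition of $\cP_0$.
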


\begin{proof}
Since $w_k - \ch(E_i)$ is the sum of the characters of the remaining Jordan-H\"older factors, we must, in addition to the observations of the previous paragraph have $b \le 1$.

If $b=0$, i.e.~$\ch(E_i)=(a,0,0)$, then Proposition \ref{prop:mv0square0} shows $\ch(E_i) = (1, 0, 0)$. If $b=1$, we have $\ch(E_i)=w_{k-a}$.


It remains to prove that $\cW_\chi$ is a wall for $w_k$ for $k>0$. By Theorem \ref{thm:mukai} and the definition of $w_k$ and $R$, there exists a stable object $E'$ with Chern character $w_{k-1}$. Then $E'\oplus \cO_X$ is a strictly $\sigma_0$-semistable object, and so $\cW_\chi$ describes a wall for the Chern character $w_k$.
\end{proof}

Fix $K \ge 0$.
Let $\sigma_+$ be a geometric stability condition, i.e.~one given as in Theorem \ref{thm:stabconstr}, sufficiently nearby the wall corresponding to $\cW_\chi$, on the side where the associated slope function satisfies $\nu_+(\cO)<\nu_+(w_k)$ for $k\geq 0$. It is immediate from the local finiteness of walls (see Proposition \ref{prop:walls}) that $\sigma_+$ is not separated by a wall from $\cW_\chi$ for any $w_k$ and $0 \le k \le K$; in particular,
$M^{s}_{\sigma_+}(w_k) = M_{\sigma_+}(w_k) \subset M^{ss}_{\sigma_0}(w_k)$. By Proposition \ref{prop:gwall}, the stability condition $\sigma_+$ is in fact in the (dual of the) Gieseker-chamber of $w_k$. Moreover:

\begin{Prop}[{\cite[Proposition 5.1]{BM:walls}}] \label{prop:HNnearby}
Let $E$ be $\sigma_0$-semistable and of Chern character $w_k$ for $0 \le k \le K$. Let $E_i$ be any of its Harder-Narasimhan filtration factors with respect to $\sigma_+$. Then
$E_i$ is $\sigma_0$-semistable, and its Chern character is contained in $\cW_\chi$.
Moreover, the Jordan-H\"older factors of $E_i$ are a subset of the Jordan-H\"older factors of $E$.
\end{Prop}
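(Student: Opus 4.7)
My plan is to combine the explicit description of the $\sigma_0$-Jordan--H\"older factors from Lemma \ref{Owall} with a refinement argument comparing the $\sigma_0$-JH filtration of $E$ with the $\sigma_+$-HN filtration, both viewed as filtrations in the common heart $\Coh^{\beta_0} X$ (taking $\sigma_+$ in the form $\sigma_{\alpha_+, \beta_0}$ so that the two hearts literally coincide).

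First, I would apply Lemma \ref{Owall} to list the $\sigma_0$-JH factors $F_1, \dots, F_n$ of $E$: each is either $\cO_X$ or a $\sigma_0$-stable object of class $w_{k'}$ for some $0 \le k' \le k$, so every $\ch(F_j)$ lies in $\cW_\chi$. Since $\sigma_+$ lies in the chamber adjacent to $\cW_\chi$ and, by Proposition \ref{prop:walls}, is not separated from $\cW_\chi$ by any other wall for the classes $(1,0,0)$ or $w_{k'}$ with $0 \le k' \le K$, each $F_j$ must also be $\sigma_+$-stable.

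Writing the $\sigma_+$-HN filtration as $0 \subset E^1 \subset \cdots \subset E^m = E$ with factors $E_i^+ := E^i/E^{i-1}$, the aim is to show that each $E_i^+$ is an iterated extension of some sub-multiset of $\{F_1, \dots, F_n\}$. Granting this, all three conclusions follow at once: $\ch(E_i^+)$ lies in $\cW_\chi$ because $\cW_\chi$ is a linear subspace containing every $\ch(F_j)$; $E_i^+$ is $\sigma_0$-semistable of slope $\nu_0$ because each $F_j$ is; and the $\sigma_0$-JH factors of $E_i^+$ are exactly the $F_j$'s occurring in the extension, which form a sub-multiset of the $\sigma_0$-JH factors of $E$.

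The main obstacle is establishing this subset claim. I would prove it by iteratively intersecting the two filtrations in the heart and applying Zassenhaus's butterfly lemma to obtain a common refinement; a slope argument, using that the $\sigma_+$-slopes of consecutive HN factors are strictly decreasing while each $F_j$ is $\sigma_+$-stable of one definite $\sigma_+$-slope, will force every nonzero refinement factor to be isomorphic to some $F_j$ and to sit inside a unique $E_i^+$. This is precisely the mechanism behind \cite[Proposition 5.1]{BM:walls}, which we invoke.
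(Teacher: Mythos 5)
The paper does not prove this statement at all; it cites \cite[Proposition 5.1]{BM:walls}, whose proof runs in the opposite logical order from yours: one first shows, using the local finiteness of walls and the fact that $\sigma_+$ is \emph{sufficiently close} to $\sigma_0$, that any $\sigma_+$-HN factor of a $\sigma_0$-semistable object must have the same $\sigma_0$-phase as $E$ (equivalently, Chern character in $\cW_\chi$); the sub-multiset statement about JH factors is then the easy consequence, since a subobject of a semistable object with equal slope is semistable with JH factors a subset. Your proposal instead tries to deduce everything from a formal refinement of the two filtrations, and the crucial step --- ``a slope argument will force every nonzero refinement factor to be isomorphic to some $F_j$'' --- is exactly where the content lies, and it is not a valid deduction.

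Concretely: the Zassenhaus/Schreier refinement produces factors that are \emph{subquotients} of the $F_j$, and a $\sigma_+$-stable object has many proper nonzero subobjects in the heart (of strictly smaller $\sigma_+$-slope, with quotient of strictly larger slope); nothing in the monotonicity of HN slopes prevents such a subobject and its quotient from landing in different HN factors. The general claim ``if $E$ is an iterated extension of $\sigma_+$-stable objects $F_j$, then each $\sigma_+$-HN factor of $E$ is an iterated extension of a sub-multiset of the $F_j$'' is simply false. For instance, on $\P^1$ with slope stability there is a short exact sequence
\[
0 \to \cO(-2) \to \cO(1)\oplus\cO(-1) \to \cO(2) \to 0
\]
(take the inclusion given by sections of $\cO(3)$ and $\cO(1)$ with disjoint zeros), exhibiting the middle term as an extension of the stable objects $\cO(\pm 2)$; yet its HN factors $\cO(1)$ and $\cO(-1)$ are not iterated extensions of any sub-multiset of $\{\cO(-2),\cO(2)\}$, and in the common refinement the stable factor $\cO(2)$ is broken into $\cO(1)$ and a torsion piece lying in different HN factors. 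What rules this out in the situation of the Proposition is not the $\sigma_+$-stability of the $F_j$ (which you establish correctly, but which is not the essential input) but the hypothesis that all $F_j$ have the same $\sigma_0$-slope together with the closeness of $\sigma_+$ to $\sigma_0$: this forces $\ch$ of any HN factor into $\cW_\chi$ via local finiteness of walls. If you reinstate that argument you are back to the proof in \cite{BM:walls}; as written, your mechanism has a genuine gap at its central step.
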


\begin{Cor}
Let  $E \in \Coh^{\beta_0}$ be a $\sigma_0$-semistable object with character $w_k$. Then it is $\sigma_+$-stable if and only if for every $L \in \Pic^0(X)$ we have $\Hom(E,L)=0$. 
\label{cor:ston}
\end{Cor}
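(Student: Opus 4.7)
My plan is to analyze the $\sigma_+$-Harder--Narasimhan (HN) filtration of $E$. By Proposition \ref{prop:HNnearby}, each $\sigma_+$-HN factor is $\sigma_0$-semistable with Chern character in $\cW_\chi$, hence of the form $a(1,0,0)+b w_0$ with $a \in \Z$ and $b \in \Z_{\ge 0}$ (as in the discussion preceding Lemma \ref{Owall}). Since $\ch(E) = w_k$ contributes total $b = 1$, exactly one HN factor has $b = 1$ (character $w_{a'}$ for some $a' \in \Z$), and all remaining factors have $b = 0$ (characters $(m_i,0,0)$ with $m_i \ge 1$).

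For the forward direction, suppose $E$ is $\sigma_+$-stable and let $L \in \Pic^0(X)$. By Proposition \ref{prop:gwall}, $L$ is $\sigma_+$-stable, and $\phi_+(L) = \phi_+(\cO) < \phi_+(E)$ by the choice of $\sigma_+$. A standard Bridgeland-stability argument kills $\Hom(E,L)$: the image of any nonzero map would be both a quotient of stable $E$ (forcing its phase $\ge \phi_+(E)$) and a subobject of stable $L$ (forcing its phase $\le \phi_+(L)$), contradicting $\phi_+(L) < \phi_+(E)$.

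For the converse, assume $E$ is not $\sigma_+$-stable. I first rule out strict $\sigma_+$-semistability: a $\sigma_+$-stable proper JH subobject $F$ of the same phase must also have $\ch(F) \in \cW_\chi$, since $\langle \ch(F),\ch(E)\rangle$ contains $\Ker Z_+$ and hence, by local finiteness of walls, equals $\cW_\chi$. The same $b \in \{0,1\}$ analysis then applies, and the $b=0$ possibility is excluded by $\nu_+(\cO) < \nu_+(E)$, while $b=1$ together with $\sum b_i = 1$ forces a unique JH factor -- i.e.~$E$ is already $\sigma_+$-stable, a contradiction. Hence $E$ must be $\sigma_+$-unstable with a nontrivial HN filtration. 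The slope comparison $\nu_+(w_{a'}) > \nu_+(\cO)$ for every realizable $a'$ forces the last HN factor $F$ (a quotient of $E$) to have $b=0$, so $\ch(F) = (m,0,0)$ with $m \ge 1$. Proposition \ref{prop:mv0square0} produces a stable quotient $F \twoheadrightarrow L$ with $L \in \Pic^0(X)$, and the composition $E \twoheadrightarrow F \twoheadrightarrow L$ is the desired nonzero element of $\Hom(E,L)$.

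The main subtlety is verifying $\nu_+(w_{a'}) > \nu_+(\cO)$ for every integer $a'$ with $w_{a'}$ realizable in $\Coh^{\beta_0}$ -- the setup only records this hypothesis for $a' \ge 0$. It follows from the collinear identity $Z_+(w_{a'}) = a' Z_+(\cO) + Z_+(w_0)$ together with $\arg Z_+(\cO) < \arg Z_+(w_0)$: for any $a' \in \Z$ such that $Z_+(w_{a'})$ lies in the upper half-plane, its argument strictly exceeds $\arg Z_+(\cO)$, which is exactly the required slope inequality.
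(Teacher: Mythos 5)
Your proof is correct and follows essentially the same route as the paper: the forward direction via the phase inequality $\nu_+(L)<\nu_+(w_k)$ and $\sigma_+$-stability of $L$, and the converse via Proposition \ref{prop:HNnearby}, the $b\in\{0,1\}$ character analysis on the wall $\cW_\chi$, the slope comparison forcing the destabilizing quotient to have class $(m,0,0)$, and Proposition \ref{prop:mv0square0} to extract $L\in\Pic^0(X)$. The only differences are organizational — you work with the full HN filtration rather than a single destabilizing quotient, and you spell out two points the paper leaves implicit (strict semistability is excluded since $\sigma_+$ lies in a chamber for $w_k$, and the collinearity argument giving $\nu_+(w_{a'})>\nu_+(\cO)$ for all realizable $a'$) — both of which are handled correctly.
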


\begin{proof}
Since $\nu_+(L) = \nu_+(\cO) < \nu_+(w_k)$, and since $L$ is $\sigma_+$-stable by Proposition \ref{prop:walls}, the condition is clearly necessary. 

Conversely, assume that $E$ is $\sigma_0$-semistable but unstable with respect to $\sigma_＋$. By
Proposition \ref{prop:HNnearby}, there is a destabilizing quotient 
$E \onto E'$ such that $E'$ is $\sigma_0$-semistable with $\ch(E') \in \cW_\chi$, and such that the Jordan-H\"older factors of $E'$ are a subset of those of $E$. 
By Lemma \ref{Owall}, the character of $E'$ is either equal to $w_{k'}$ for $k' < k$, or equal to $(r, 0, 0)$. Since $\nu_+(E)\geq \nu_+(E')$, the former case is impossible, and so $\ch(E') = (r, 0, 0)$. In light of Proposition \ref{prop:mv0square0}, we can in fact assume $r = 1$, and so $E' = L$ for some $L \in \Pic^0(X)$.
\end{proof}

The following lemma is well-known.
\begin{Lem} \label{lem:reduceS}
Let $E$ be an object that is $\sigma_0$-semistable, and let $S$ be one of its JH factors. Then there exists a unique short exact sequence
\[ T \into E \onto E' \]
such that $T$ is $\sigma_0$-semistable with all JH factors isomorphic to $S$, and such that $\Hom(S, E') = 0$.
\label{dec}
\end{Lem}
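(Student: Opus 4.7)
The plan is to work inside the abelian category $\cP_0$ of Definition \ref{def:wk}: the $\sigma_0$-semistable objects of a fixed phase form an abelian subcategory of $\Db(X)$, every object of $\cP_0$ has finite length (a Jordan--H\"older filtration into $\sigma_0$-stable factors), and the simple objects of $\cP_0$ are precisely the $\sigma_0$-stable objects. So $S$ is a simple object of $\cP_0$ and $E \in \cP_0$ has finite length. Once this finite-length abelian structure is recorded, the statement becomes a general fact about $S$-isotypic subobjects in a finite-length abelian category, which I will verify by construction.

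First I would define $T \subseteq E$ to be the \emph{maximal} subobject whose Jordan--H\"older factors in $\cP_0$ are all isomorphic to $S$ (i.e.~the maximal $S$-isotypic subobject). Such a maximum exists because the zero subobject qualifies, the sum of any two $S$-isotypic subobjects $T_1, T_2 \subseteq E$ is again $S$-isotypic (since $T_1+T_2$ is a quotient of $T_1 \oplus T_2$ and the property of being $S$-isotypic is closed under quotients in a finite-length abelian category), and $E$ has finite length. To verify $\Hom(S, E/T) = 0$: any nonzero map $S \to E/T$ is injective since $S$ is simple, so pulling it back along $E \onto E/T$ produces a subobject $\widetilde{T} \subseteq E$ fitting into $T \into \widetilde{T} \onto S$, which is $S$-isotypic and strictly larger than $T$, contradicting maximality.

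For uniqueness, the key auxiliary observation (proved by induction on the length of the source, using that $S$ is simple) is that any morphism from an $S$-isotypic object to an object admitting no nonzero subobject isomorphic to $S$ must vanish. Given another candidate $T'' \into E \onto E''$ satisfying the conditions of the lemma, this auxiliary fact forces the composite $T \into E \onto E''$ to be zero, so $T \subseteq T''$; by the symmetric argument $T'' \subseteq T$, giving $T = T''$. I do not expect a serious obstacle at any step: the entire statement is formal once one exploits the finite-length abelian structure on $\cP_0$, which is precisely why the authors label the lemma well-known.
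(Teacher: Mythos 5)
Your proof is correct and is essentially the paper's argument, fleshed out: the paper proves existence by induction on the Jordan--H\"older length of $E$ (equivalent to your construction of the maximal $S$-isotypic subobject, which also rests on finite length of $\cP_0$) and proves uniqueness via exactly the Hom-vanishing you isolate as your auxiliary observation, namely $\Hom(T,\tilde E')=0=\Hom(\tilde T, E')$ followed by a diagram chase. No gaps; your write-up simply makes explicit the finite-length abelian structure of $\cP_0$ that the paper's one-line proof takes for granted.
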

\begin{proof}
The existence of $E'$ is proved by induction on the length of the Jordan-H\"older filtration of $E$.
If $\tilde T \into E \onto \tilde E'$ is another such short exact sequence, then $\Hom(T, \tilde E') = 0 =
\Hom(\tilde T, E')$, and a simple diagram-chase proves the uniqueness.
\end{proof}

We will use this in the following context.

\def\ere{E_{\mathrm{red}}}
\begin{Def}
Let $E$ be an object an object that is $\sigma_0$-semistable. We apply Lemma \ref{lem:reduceS} with $S = \cO_X$ and define $E_{\mathrm{red}}$  as the unique object with a short exact sequence
\begin{equation}
T \into E \onto E_{\mathrm{red}},
\label{er}
\end{equation}
such that $T$ is an iterated self-extension of $\cO_X$,
and $\Hom(\cO_X, \ere) = 0$.
\label{erd}
\end{Def}

\section{Main proof}

We continue to use the notation from the previous section, in particular see Definition
\ref{def:wk}, and we continue to assume $\chi < 0$.  The goal of this section is Theorem \ref{thm:maintechnical}, on non-emptiness and dimension of the loci
\[\mhk: =\left\{E\in M^{s}_{\sigma_+}(w_k)\;\middle| \; \hom(\cO_X,E)=h\right\}.\]

The strategy is to control the existence of objects in $\mhk$, and to construct them, via the two short exact
sequences appearing in Lemma \ref{lem:2ses}, respectively. First we observe the following:

\begin{Lem} \label{lem:extdim-new}
If $E \in \mhk$, then $\dim \Ext^1(E, \cO_X) = h - \chi$.
\end{Lem}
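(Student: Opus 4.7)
The plan is to combine three ingredients: Serre duality, Hirzebruch--Riemann--Roch (or the Mukai pairing), and the characterization of $\sigma_+$-stability from Corollary \ref{cor:ston}.

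First I would use Serre duality on the abelian surface $X$ (where $K_X \cong \cO_X$) to write
\[ \Ext^i(E, \cO_X) \cong \Ext^{2-i}(\cO_X, E)^{\vee}. \]
In particular $\ext^1(E, \cO_X) = \ext^1(\cO_X, E)$ and $\ext^2(E, \cO_X) = \hom(\cO_X, E) = h$.

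Next I would compute $\chi(\cO_X, E)$ via the Mukai pairing. Since $X$ is an abelian surface, $\td(X) = 1$ and the Mukai pairing defined in the excerpt gives
\[ \chi(\cO_X, E) = -\langle (1,0,0), (-R+k, H, \chi) \rangle = \chi. \]
Expanding $\chi(\cO_X, E) = \hom(\cO_X, E) - \ext^1(\cO_X, E) + \ext^2(\cO_X, E)$ and substituting the Serre duality identities yields
\[ \chi = h - \ext^1(E, \cO_X) + \hom(E, \cO_X). \]

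Finally, since $E \in M^s_{\sigma_+}(w_k)$, Corollary \ref{cor:ston} applied to $L = \cO_X$ gives $\Hom(E, \cO_X) = 0$. Substituting this into the displayed equation yields $\ext^1(E, \cO_X) = h - \chi$, as desired. There is no real obstacle in this proof; it is a direct application of results already in place, and the only subtlety is remembering to use the $\sigma_+$-stability (rather than only $\sigma_0$-semistability) to kill $\Hom(E, \cO_X)$.
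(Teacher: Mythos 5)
Your proof is correct and is essentially the paper's own argument, just written out in full: the paper's proof is the one-line ``Serre duality, $\chi(E)=\chi$, and $\Hom(E,\cO_X)=0$ by stability.'' The only (harmless) implicit step in both versions is that $\Ext^i(E,\cO_X)$ vanishes outside $0\le i\le 2$ because $E$ and $\cO_X$ lie in a common heart.
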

\begin{proof}
This follows from Serre duality, $\chi(E) = \chi$, and $\Hom(E, \cO_X) = 0$ by stability.
\end{proof}

\begin{Lem} \label{lem:2ses}
Let $E$ be an object in $M^h_k$.
\begin{enumerate}
\item \label{enum:sesW}
Given a subspace $W \subset H^0(E)$, consider the short exact sequence in $\cP_0$ given as
\begin{equation} \label{eq:sesW}
 \cO_X \otimes W \into E \onto E'.
\end{equation}
Then $E'$ is $\sigma_+$-stable with $h^0(E') \ge \dim W + \chi$.
\item \label{enum:sesV}
Conversely, let $V \subset \Ext^1(E, \cO_X)$, and consider the natural short
exact sequence 
\begin{equation} \label{eq:sesV}
\cO_X \otimes V^\vee \into \widetilde{E} \onto E.
\end{equation}
Then $\widetilde{E}$ is $\sigma_+$-stable with
$h^0(\widetilde E) \ge \dim V$.
\end{enumerate}
\label{lem:ext}
\end{Lem}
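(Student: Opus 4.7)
The plan is to establish both parts via parallel long-exact-sequence arguments, relying on two structural facts: (i) $\cO_X$ is simple in the abelian category $\cP_0$, since $\ch(\cO_X)^2=0$ implies $\cO_X$ is $\sigma_0$-stable by Proposition \ref{prop:walls}; and (ii) $\sigma_+$-stability inside $\cP_0$ is detected by Corollary \ref{cor:ston}, i.e.\ by the vanishing of $\Hom(-,L)$ for all $L\in\Pic^0(X)$. Since $E\in\mhk$ is $\sigma_+$-stable, it is $\sigma_0$-semistable of the same slope as $\cO_X$, so both $E$ and $\cO_X$ lie in $\cP_0$.

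For part (a), I first check that the evaluation $\cO_X\otimes W\to E$ is a monomorphism in $\cP_0$: since $\cO_X$ is simple, every subobject of $\cO_X\otimes W$ has the form $\cO_X\otimes W'$ for some $W'\subseteq W$, and the induced map $\cO_X\otimes W'\to E$ corresponds under the identification $\Hom(\cO_X\otimes W',E)=(W')^\vee\otimes H^0(E)$ to the inclusion $W'\hookrightarrow H^0(E)$, hence is nonzero unless $W'=0$. This yields the short exact sequence $\cO_X\otimes W\into E\onto E'$ in $\cP_0$. Applying $\Hom(-,L)$ for $L\in\Pic^0(X)$ and using $\Hom(E,L)=0$ gives $\Hom(E',L)=0$, so $E'$ is $\sigma_+$-stable by Corollary \ref{cor:ston}. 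For the bound on $h^0(E')$, the essential input is $\Ext^2(\cO_X,E)=\Hom(E,\cO_X)^\vee=0$, by Serre duality and Corollary \ref{cor:ston}; combined with $\ext^2(\cO_X,\cO_X^{\oplus\dim W})=\dim W$, the long exact sequence for $\Hom(\cO_X,-)$ forces $\Ext^2(\cO_X,E')=0$ and surjectivity of $\Ext^1(\cO_X,E')\onto\C^{\dim W}$. Thus $h^1(E')\ge\dim W$, and since $\chi(E')=\chi$ the Euler characteristic yields $h^0(E')=\chi+h^1(E')\ge\chi+\dim W$.

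For part (b), $\widetilde E$ lies in $\cP_0$ automatically, as $\cP_0$ is extension-closed. Stability is verified by applying $\Hom(-,L)$ to $\cO_X\otimes V^\vee\into\widetilde E\onto E$: for $L\in\Pic^0(X)\setminus\{\cO_X\}$ one has $\Hom(\cO_X,L)=0$, so the vanishing $\Hom(E,L)=0$ passes to $\widetilde E$; for $L=\cO_X$, the connecting map $V\to\Ext^1(E,\cO_X)$ is, by the very construction of the universal extension, the given inclusion, hence injective, forcing $\Hom(\widetilde E,\cO_X)=0$. Corollary \ref{cor:ston} then yields $\sigma_+$-stability. Finally, applying $\Hom(\cO_X,-)$ produces an injection $V^\vee=\Hom(\cO_X,\cO_X\otimes V^\vee)\hookrightarrow H^0(\widetilde E)$, so $h^0(\widetilde E)\ge\dim V$.

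The main obstacle I foresee is the categorical step in part (a): verifying that the evaluation is genuinely a monomorphism in the abelian subcategory $\cP_0$, which crucially uses that $\cO_X$ is stable (not merely semistable) in $\cP_0$. The remaining numerical estimates reduce to long-exact-sequence bookkeeping, with $\Ext^2(\cO_X,E)=0$ being the only nontrivial vanishing.
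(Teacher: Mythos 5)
Your proof is correct and follows essentially the same route as the paper: stability of $E'$ and $\widetilde E$ is checked via Corollary \ref{cor:ston} by applying $\Hom(-,L)$ to the two sequences, and the $h^0$-bounds come from Serre duality, $\chi(E')=\chi$, and the relevant long exact sequences. Your derivation of $h^1(E')\ge\dim W$ via the surjection $\Ext^1(\cO_X,E')\onto\Ext^2(\cO_X,\cO_X\otimes W)$ is just the Serre-dual form of the paper's statement that the extension class forces $\ext^1(E',\cO_X)\ge\dim W$, and your verification that the evaluation map is a monomorphism in $\cP_0$ is a welcome detail the paper leaves implicit.
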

\begin{proof}
Since $E'$ and $\widetilde{E}$ are $\sigma_0$-semistable, in light of Corollary \ref{cor:ston} we
need to verify that they do not admit $L \in \Pic^0(X)$ as a quotient in $\cP_0$. This is immediate
for $E'$. For $\widetilde E$, we consider the long exact sequence associated via $\Hom(\blank, L)$:
\[ 0 = \Hom(E, L) \to \Hom(\widetilde E, L) \into \Hom(\cO_X, L) \otimes V \to \Hom(E, L[1]). \]
For $L \neq \cO_X$, the vanishing is immediate. For $L = \cO_X$, it follows by our choice of
$V$.

The bound for $h^0(\widetilde{E})$ is immediate, as $W$ is a subspace of $H^0(\widetilde{E})$ and the quotient 
injects into $H^0(E)$. As for $E'$, observe that $H^2(E') = \Hom(E', \cO_X)^\vee = 0$ and, due to
the existence of the non-trivial extension $E$, also $\dim H^1(E') = \Ext^1(E', \cO_X) \ge \dim W$.
Combined with $h^0(E') = \chi + h^1(E')$ this shows the remaining claim.
\end{proof}

They key difficulty is that for both short exact sequences above, we
only obtain a bound for $h^0$ of the corresponding object, and that $h^0(E')$ may be non-zero even
for $W = H^0(E)$. (In contrast, in the case of K3 surface,
these dimensions are directly determined due to $h^1(\cO_X) = 0$.) As we have explained previously, we need to proceed by induction. Moreover, in order to control
the dimension of $H^0$ precisely for \emph{some} of these extensions, it turns out that we have to
prove more precise statements in our induction claim. These more precise claims are based on
Lemma \ref{lem:reduceS}, by involving the class of $E_{\mathrm{red}}$.

\def\kred{k_{\mathrm{red}}}
\begin{Cor}
Let $E$ be an object in $\mhk$, then $\ere$ given in Definition \ref{erd}  is $\sigma_+$-stable. In
particular, $\ere \in M^0_{\kred}$ for some $k_{\mathrm{red}}\in\Z_{\geq 0}$. 
\label{ered}
\end{Cor}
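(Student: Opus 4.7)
The plan is to verify the three defining properties of $M^0_{\kred}$ for $\ere$ with some $\kred\ge 0$: that $\ere$ is $\sigma_+$-stable, that its Chern character equals $w_{\kred}$, and that $\hom(\cO_X,\ere)=0$. The last of these is built into Definition \ref{erd}, so the work is to secure the first two.

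For $\sigma_+$-stability, I would apply Corollary \ref{cor:ston}. Both $T$ (as an iterated self-extension of $\cO_X$, which sits on $\cW_\chi$) and $E$ are $\sigma_0$-semistable of the common slope $\nu_0$, so the quotient $\ere$ inherits $\sigma_0$-semistability in $\cP_0$. Applying $\Hom(\blank,L)$ to the short exact sequence \eqref{er} for any $L\in\Pic^0(X)$ gives the left-exact piece
\[ 0\to \Hom(\ere,L)\to \Hom(E,L), \]
and the $\sigma_+$-stability of $E$ together with Corollary \ref{cor:ston} forces $\Hom(E,L)=0$. Hence $\Hom(\ere,L)=0$ for every $L\in\Pic^0(X)$, and a second application of Corollary \ref{cor:ston} upgrades $\sigma_0$-semistability of $\ere$ to $\sigma_+$-stability.

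For the Chern character, $\ch(T)=(\ell,0,0)$ where $\ell$ is the length of the iterated self-extension $T$, so $\ch(\ere)=w_k-(\ell,0,0)=w_{k-\ell}$. The main obstacle is checking $\kred:=k-\ell\ge 0$. Here I will invoke Lemma \ref{Owall}: each Jordan--H\"older factor of $E$ in $\cP_0$ has character either $(1,0,0)$ (so is some $L\in\Pic^0(X)$) or $w_{k'}$ with $0\le k'\le k$. Matching $c_1$-contributions across the JH filtration forces exactly one $w$-type factor, of some class $w_{k_1}$ with $0\le k_1\le k$, together with $k-k_1$ degree-zero line-bundle factors. Since every JH factor of $T$ is isomorphic to $\cO_X$, the length $\ell$ cannot exceed the number of $\cO_X$'s appearing among the JH factors of $E$, which is in turn at most $k-k_1\le k$. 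Thus $\kred\ge k_1\ge 0$, completing the verification that $\ere\in M^0_{\kred}$.
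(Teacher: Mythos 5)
Your proof is correct. It does, however, take a somewhat different route from the paper's. The paper's (very terse) proof runs the inductive construction of Lemma \ref{lem:reduceS}: at each step one quotients by $\cO_X\otimes H^0$, invokes Lemma \ref{lem:2ses}\,(\ref{enum:sesW}) to see that the quotient is again $\sigma_+$-stable, and iterates until $\Hom(\cO_X,-)$ vanishes; the end result is $\ere$ by the uniqueness in Lemma \ref{lem:reduceS}. You instead bypass the iteration and apply the stability criterion of Corollary \ref{cor:ston} once, directly to the single quotient $T\into E\onto\ere$, using $\Hom(\ere,L)\into\Hom(E,L)=0$. Since Lemma \ref{lem:2ses} is itself proved via Corollary \ref{cor:ston}, the two arguments rest on the same mechanism, but yours is more direct and avoids having to track $h^0$ through the intermediate steps. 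You also do something the paper's one-line proof leaves implicit: you verify $\kred\ge 0$ by bookkeeping of Jordan--H\"older factors via Lemma \ref{Owall} (one factor of type $w_{k_1}$ with $k_1\ge 0$ and $k-k_1$ factors of type $(1,0,0)$, so the length $\ell$ of $T$ satisfies $\ell\le k-k_1$). An equivalent shortcut, once $\sigma_+$-stability of $\ere$ is known, is to note that a stable object on an abelian surface is never rigid, so $w_{\kred}^2\ge 0$, which forces $\kred\ge 0$ by the maximality of $R$ in Definition \ref{def:wk}; but your multiset argument is equally valid. The only presentational quibble is that Corollary \ref{cor:ston} is stated for objects of character $w_k$, so strictly speaking the character computation in your second paragraph should precede its application in the first; this is an ordering issue, not a gap.
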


\begin{proof}
This follows from the inductive construction of $\ere$ in Lemma \ref{lem:reduceS}, and Lemma \ref{lem:2ses}.
\end{proof}

\def\mhkkred{M_{k,k_{\mathrm{red}}}^h}
\begin{Def}
We define  a locally closed subset $\mhkkred$  of $\mhk$ by
\[\mhkkred:=\left\{E\in \mhk\; \middle| \; \ere\in M^0_{\kred}\right\}.\]
\label{def:mkt}
\end{Def}

Our induction process will be controlled by the following piece-wise linear function:

\begin{Def}\label{def:f}
Let $\Delta_{KLM}\colon \R_{\geq 0} \rightarrow \R$ be a function inductively defined as follows:
\begin{itemize}
\item $\Delta_{KLM}(t):=t$, when $0\leq t\leq 1$;
\item $\Delta_{KLM}(t):=\Delta_{KLM}(t-1)+t$ for $t \ge 1$.
\end{itemize}
\end{Def}
It is a continuous piece-wise linear function with $\Delta_{KLM}(n) = \frac{n(n+1)}2$ for $n \in \Z_{\geq 0}$; explicitly,
\begin{equation} \label{eq:Deltaexplicit}
\Delta_{KLM}(t) = \left(t-\frac{\floor*{t}}{2}\right)\left(\floor*{t}+1\right)
\quad \text{for all $t \in \R_{\geq 0}$.}
\end{equation}

\begin{Prop}
The space $\mhkkred$ is empty when
\begin{equation}\frac{k-k_{\mathrm{red}}}{-\chi}< \Delta_{KLM}\left(\frac{h}{-\chi}\right). \label{eq:klm1} \end{equation}
\label{mhtkn}
\end{Prop}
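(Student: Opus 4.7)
The plan is to prove the contrapositive by induction on $k - k_{\mathrm{red}}$: every $E \in \mhkkred$ satisfies $\tfrac{k - k_{\mathrm{red}}}{-\chi} \ge \Delta_{KLM}\!\bigl(\tfrac{h}{-\chi}\bigr)$. The base case $k = k_{\mathrm{red}}$ forces $h = 0$ since $E = \ere$, and both sides of the inequality are zero.

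For the inductive step, take $E \in \mhkkred$ with $k > k_{\mathrm{red}}$, so necessarily $h > 0$. The first part of Lemma \ref{lem:ext} applied with $W = H^0(E)$ produces a short exact sequence
\[
\cO_X^h \into E \onto E',
\]
in which $E'$ is $\sigma_+$-stable of character $w_{k-h}$ with $h' := h^0(E') \ge h + \chi$. To apply the inductive hypothesis to $E'$ I first verify $E'_{\mathrm{red}} = E_{\mathrm{red}}$, so that $E' \in M^{h'}_{k-h,\,k_{\mathrm{red}}}$. Indeed, the evaluation map $\cO_X \otimes H^0(E) \to E$ factors through the subobject $T$ of Definition \ref{erd} (since $\Hom(\cO_X, \ere) = 0$ forces every section of $E$ to come from $T$); hence $T / \cO_X^h$ is again an iterated self-extension of $\cO_X$ and sits inside $E'$ with quotient $E_{\mathrm{red}}$, so the uniqueness statement in Lemma \ref{dec} forces $E'_{\mathrm{red}} = E_{\mathrm{red}}$. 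Since $(k-h) - k_{\mathrm{red}} < k - k_{\mathrm{red}}$, the induction applies and gives
\[
\tfrac{k - k_{\mathrm{red}}}{-\chi} \;\ge\; \tfrac{h}{-\chi} + \Delta_{KLM}\!\left(\tfrac{h'}{-\chi}\right).
\]

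To close the induction, I split into cases according to whether $\tfrac{h}{-\chi} \le 1$ or not. If $\tfrac{h}{-\chi} \le 1$, then $\Delta_{KLM}\!\bigl(\tfrac{h}{-\chi}\bigr) = \tfrac{h}{-\chi}$ and $\Delta_{KLM}\!\bigl(\tfrac{h'}{-\chi}\bigr) \ge 0$ immediately yield the claim. If $\tfrac{h}{-\chi} > 1$, then $h' \ge h + \chi > 0$ gives $\tfrac{h'}{-\chi} \ge \tfrac{h}{-\chi} - 1$, and combining monotonicity of $\Delta_{KLM}$ with the defining recursion $\Delta_{KLM}(t) = \Delta_{KLM}(t-1) + t$ for $t \ge 1$ closes the case. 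The main technical point is the compatibility of reductions $E'_{\mathrm{red}} = E_{\mathrm{red}}$; granted that, the statement is essentially a telescoping sum $\sum_i h^0(E^{(i)}) = k - k_{\mathrm{red}}$ over the iterated quotient process, packaged by the piecewise-linear recursion for $\Delta_{KLM}$. Only the subobject construction in Lemma \ref{lem:ext} is used here; the dual extension construction will presumably be needed separately to produce objects realizing the inequality in the complementary non-emptiness statement.
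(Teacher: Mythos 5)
Your proof is correct and follows essentially the same route as the paper: quotient by $\cO_X\otimes H^0(E)$ via Lemma \ref{lem:2ses}, observe $(E')_{\mathrm{red}}\cong E_{\mathrm{red}}$, and induct using the functional equation of $\Delta_{KLM}$ (the paper phrases this as a contradiction argument with induction on $k$ and a separate base case $k\le k_{\mathrm{red}}-\chi$, but that is only a cosmetic reorganization of your contrapositive induction on $k-k_{\mathrm{red}}$). Your explicit verification that the evaluation map factors through $T$, so that $(E')_{\mathrm{red}}=E_{\mathrm{red}}$, is a point the paper asserts with only a reference to the definition, and is a welcome addition.
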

Before the proof, let us use the statement of the Proposition \ref{mhtkn} in order to illustrate the purpose
of the function $\Delta_{KLM}$. Consider the short exact sequence \eqref{eq:sesW} for $W = H^0(E)$,
and assume that $E \in \mhkkred$ for a tuple $k, \kred, h$ that satisfies \eqref{eq:klm1}. Then $E'
\in M^{h'}_{k', \kred}$ for $k' = k - h$ and $h' \ge h + \chi$. Then the tuple $k', \kred, h'$ also
satisfies \eqref{eq:klm1} precisely because of the functional equation satisfied by $\Delta_{KLM}$.

\begin{proof}

When $k\leq  k_{\mathrm{red}}-\chi$, suppose there is an object $E\in M^{h}_{k,k_{\mathrm{red}}}$
such that inequality (\ref{eq:klm1}) holds, i.e.~$h\geq k-k_{\mathrm{red}}+1\geq 1$. Consider the
quotient $E'$ defined by
\begin{equation}
\cO_X^{\oplus h}\cong \cO_X\otimes \Hom(\cO_X,E) \into  E \onto E'.
\label{eq13}
\end{equation}
Note that, by Definition \ref{erd},  $\ere \cong (E')_{\mathrm{red}}$. But since $E'\in M_{k-h}$, we
have $(E')_{\mathrm{red}}\in M_{t'}=M_{\kred}$ for $t'\leq k-h\leq k_{\mathrm{red}}-1$, this leads the contradiction.

 When $k>k_{\mathrm{red}}-\chi$, we proceed by induction on $k$. Suppose  there is an object $E\in
M^{h}_{k,k_{\mathrm{red}}}$ such that the  (\ref{eq:klm1}) holds. In particular, we have $h>-\chi$.
We again consider $E'$ fitting into the short exact sequence (\ref{eq13}). By Lemma \ref{lem:2ses}, $E'$ is $\sigma_+$-stable
with $h^0(E') \ge h + \chi$.

As before, $\ere\simeq (E')_{\mathrm{red}}$, therefore, $M_{k-h,k_{\mathrm{red}}}^{h'}\neq \emptyset$ for
some $h' \ge h + \chi$. By induction on $k$,
\[
 \frac{k-h-k_{\mathrm{red}}}{-\chi}\geq \Delta_{KLM}\left(\frac{h'}{-\chi}\right) 
\ge  \Delta_{KLM}\left(\frac{h+\chi}{-\chi}\right) = \Delta_{KLM}\left(\frac{h}{-\chi}\right) -\frac{h}{-\chi}.
\]
This contradicts the assumption on $k$. 
\end{proof}

\begin{Def} 
We write $d(k, h)$ for the expected dimension of $\mhk$, which is given as
\[d(k,h):= w_0^2-2k\chi+2+h\chi-h^2 = w_k^2 + 2 + h\chi - h^2. \]
\end{Def} 

\begin{Lem}
Every irreducible component of $M^h_k$ has dimension at least $d(k,h)$.
\label{lem:dkh}
\end{Lem}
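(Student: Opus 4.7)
The plan is to realise $M^h_k$ as a determinantal degeneracy locus inside the smooth ambient moduli space $M^s_{\sigma_+}(w_k)$ and then invoke the classical codimension bound for such loci. By Theorem \ref{thm:mukai} combined with the fact that $w_k^2 \ge w_0^2 \ge 0$ (the latter by Definition \ref{def:wk}, and for $k \ge 1$ we moreover gain from $-2k\chi > 0$), the moduli space $M^s_{\sigma_+}(w_k)$ is smooth of dimension $w_k^2 + 2$, and $M^h_k$ sits inside it as a locally closed subset. I may assume that a (quasi-)universal family $\cE$ exists on $M^s_{\sigma_+}(w_k) \times X$; otherwise the arguments below run \'etale-locally on $M^s_{\sigma_+}(w_k)$, which is enough for a dimension count.

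Next, I would produce a two-term complex of locally free sheaves $[\cA^0 \xrightarrow{\varphi} \cA^1]$ on $M^s_{\sigma_+}(w_k)$ representing $R(\pi_M)_* \RlHom(p_X^* \cO_X, \cE)$, compatibly with arbitrary base change. Such a presentation exists precisely because $\Ext^i_{\Db(X)}(\cO_X, E) = 0$ for $i \notin \{0, 1\}$ uniformly in $[E]$: the negative $\Ext$s vanish because both $\cO_X$ and $E$ lie in the heart $\Coh^{\beta_0} X$ (a $\sigma_+$-stable object of class $w_k$ is $\sigma_0$-semistable, hence in $\cP_0 \subset \Coh^{\beta_0} X$, and $\cO_X$ lies in $\cP_0$ by construction of $\cW_\chi$); $\Ext^i$ vanishes for $i > 2$ because $X$ is a surface; and $\Ext^2(\cO_X, E) \cong \Hom(E, \cO_X)^\vee = 0$ by Serre duality on the abelian surface $X$ together with Corollary \ref{cor:ston}.

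Under this presentation the fibre of $\ker \varphi$ at $[E]$ is canonically $\Hom(\cO_X, E)$, so $M^h_k$ is precisely the locus $\{\mathrm{rk}\,\varphi = \mathrm{rk}\,\cA^0 - h\}$. The classical bound on determinantal loci in a smooth ambient variety then asserts that every irreducible component of this locus has codimension at most $h \cdot (h + \mathrm{rk}\,\cA^1 - \mathrm{rk}\,\cA^0)$ in $M^s_{\sigma_+}(w_k)$. Since fibre-wise Euler characteristic gives $\mathrm{rk}\,\cA^0 - \mathrm{rk}\,\cA^1 = \chi(\cO_X, E) = \chi$, this codimension is at most $h(h - \chi)$, and every irreducible component of $M^h_k$ therefore has dimension at least $w_k^2 + 2 - h(h - \chi) = d(k, h)$, as required. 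I do not expect a serious obstacle: the only delicate points are the existence of the representing two-term complex and of a (quasi-)universal family, both standard in this setting, while the crucial $\Ext^2$-vanishing follows at once from stability and Serre duality.
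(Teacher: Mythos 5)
Your proof is correct and follows essentially the same route as the paper's: both represent the derived pushforward of the family (equivalently of $\RlHom(\cO_X,\cE)$) by a two-term complex of vector bundles of rank difference $\chi$ --- using the same key vanishing $\Ext^2(\cO_X,E)\cong\Hom(E,\cO_X)^\vee=0$ from $\sigma_+$-stability and Serre duality --- and then apply the classical determinantal-locus codimension bound $h(h-\chi)$ inside the smooth $(w_k^2+2)$-dimensional moduli space. The only cosmetic difference is that the paper states the estimate for an arbitrary family over a base $S$ of finite type, whereas you work with a (quasi-)universal family or \'etale-locally on the moduli space.
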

\begin{proof}
This could be proved similarly to the case of  line bundles on curves, see e.g.~\cite[Section
IV.3]{ACGH:1}; however, to treat the cases $k \ge R$ (in which case $M_{\sigma_+}(w_k)$ parameterizes
stable sheaves) and $k < R$ (in which case it instead parameterizes derived duals of stable sheaves)
simultaneously, we present here a derived category version of the classical argument.

Consider any family $\cE \in D^b(S \times X)$ of $\sigma^+$-semistable objects of class $w_k$ over a
scheme $S$ of finite type over $\C$;
this means that the derived restriction $\cE_s := \cE|_{\{s\} \times
X} \in D^b(X)$ is an $\sigma^+$-stable object of class $w_k$ for every closed point $s$. We will
prove that the locus 
\[ S_h := \stv{s \in S}{h^0(\cE_s) = h} \subset S \]
has codimension at most $-h\chi + h^2$.

Let $\pi_S \colon S \times X \to S$ denote the projection, and consider the derived push-forward $\cF:= (\pi_S)_* \cE$. By derived base change (see e.g.~\cite[Tag~08IB]{stacks-project}), we have $H^i (\cF \otimes^{\mathbf{L}} \cO_s) = H^i(\cE_s) = 0$ for $i \neq 0, 1$ (note that $H^2(\cE_s)=\Hom(\cE_s,\cO_X)^\vee=0$ since $\cE_s$ is $\sigma_+$-stable). By \cite[Proposition
5.4]{Bridgeland-Maciocia:K3Fibrations}, it follows that $\cF$ is quasi-isomorphic to a 2-term complex of vector bundles
$\cF_0 \to \cF_1$,
and of rank $\chi = \rk \cF_0 - \rk \cF_1$. Then $S_h$ is the locus where the rank of the
differential is given by $\rk \cF_1 - h$; this has codimension at most $h(h -\chi)$ as claimed.
\end{proof}

\begin{Rem}
The proof evidently applies in much bigger generality: $X$ could be an arbitrary scheme of finite
type of the base field; we only need to assume that $\cE \in D^b(S \times X)$ has the property
that for all $s \in S$, the restriction $\cE_s$ is in  $D^b(X)$, has compact support, and cohomology
in at most two degrees.
The classical proof instead constructs the two-term complex $\cF_0 \to \cF_1$ explicitly.
\end{Rem}

Let $D$ be the remainder of division of $h$ by $-\chi$, with
$0 \le D < -\chi$.
\begin{Prop} \label{prop:nonemptyMHkkred}
The moduli space $\mhkkred$ is non-empty when $k\geq k_{\mathrm{red}}\geq 0$ and $h$ is the maximum integer such that 
\begin{equation}
\frac{k-k_{\mathrm{red}}}{-\chi} \geq \Delta_{KLM}\left(\frac{h}{-\chi}\right).\label{thk}
\end{equation}
When equality holds,  $\mhkkred$ is a bundle  over $M^0_{\kred}$ of Grassmannians of $D$-dimensional subspaces in a $(-\chi)$-dimensional vector space, and of total dimension $d(k,h)$.
\label{mhtk}
\end{Prop}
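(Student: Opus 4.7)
The strategy is to induct on $n := \lfloor h/(-\chi) \rfloor$, writing $h = -n\chi + D$ with $0 \leq D < -\chi$. The arithmetic heart of the argument is the recursion $\Delta_{KLM}(t) = \Delta_{KLM}(t-1) + t$: applied at $t = h/(-\chi)$ when $n \geq 1$, it shows that equality in \eqref{thk} for $(k, k_{\mathrm{red}}, h)$ is equivalent to equality for $(k-h, k_{\mathrm{red}}, h+\chi)$. This lets us descend from level $n$ to level $n-1$ via the short exact sequences of Lemma \ref{lem:ext}.

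\textbf{Base case} $n = 0$: if $D = 0$, then $k = k_{\mathrm{red}}$ and $M^0_{k_{\mathrm{red}}, k_{\mathrm{red}}} = M^0_{k_{\mathrm{red}}}$ by definition, which is the trivial $\Gr(0, -\chi) = \mathrm{pt}$-bundle over itself. If $0 < D < -\chi$, then $k = k_{\mathrm{red}} + D$; for $E_0 \in M^0_{k_{\mathrm{red}}}$, Lemma \ref{lem:extdim-new} gives $\dim \Ext^1(E_0, \cO_X) = -\chi$, and for each $V \in \Gr(D, \Ext^1(E_0, \cO_X))$ Lemma \ref{lem:ext}(\ref{enum:sesV}) produces a $\sigma_+$-stable extension $\cO_X \otimes V^\vee \into E \onto E_0$. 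The long exact sequence gives $h^0(E) = D$ exactly (since $h^0(E_0) = 0$), and Lemma \ref{lem:reduceS} identifies $E_{\mathrm{red}} = E_0$; the inverse map $E \mapsto (E_{\mathrm{red}}, \text{extension class})$ exhibits the $\Gr(D, -\chi)$-bundle structure.

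\textbf{Inductive step} $n \geq 1$: by the recursion, $(k-h, k_{\mathrm{red}}, h+\chi)$ is at equality, so the induction hypothesis gives the $\Gr(D, -\chi)$-bundle structure on $M^{h+\chi}_{k-h, k_{\mathrm{red}}}$. For $E' \in M^{h+\chi}_{k-h, k_{\mathrm{red}}}$ we have $\dim \Ext^1(E', \cO_X) = h$ by Lemma \ref{lem:extdim-new}, so the \emph{canonical} extension $\cO_X \otimes \Ext^1(E', \cO_X)^\vee \into E \onto E'$ (with class $\mathrm{id} \in \End(\Ext^1(E', \cO_X))$) is $\sigma_+$-stable of character $w_k$ with $h^0(E) \geq h$. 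Crucially $h^0(E) = h$: if $h^0(E) > h$, then $E_{\mathrm{red}} = E'_{\mathrm{red}} \in M^0_{k_{\mathrm{red}}}$ (Lemma \ref{lem:reduceS} applied to $E \onto E' \onto E'_{\mathrm{red}}$) would place $E$ in $M^{h+j}_{k, k_{\mathrm{red}}}$ for some $j \geq 1$, contradicting the maximality of $h$ via Proposition \ref{mhtkn}. The quotient $E \mapsto E/(\cO_X \otimes H^0(E))$ from \eqref{eq:sesW} inverts this construction, so $M^h_{k, k_{\mathrm{red}}} \cong M^{h+\chi}_{k-h, k_{\mathrm{red}}}$ inherits the $\Gr(D, -\chi)$-bundle structure.

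For non-emptiness in the strict-inequality case, the same construction works with any $E' \in M^{h'}_{k-h, k_{\mathrm{red}}}$ for $h'$ maximal (nonempty by induction on $k - k_{\mathrm{red}}$; one checks $h' \geq h + \chi$ from the monotonicity of $\Delta_{KLM}$, so any $V \subset \Ext^1(E', \cO_X)$ of dimension $h$ fits), invoking the maximality of $h$ once more to force $h^0(E) = h$. The dimension claim at equality reduces to $d(k, h) - \dim M^0_{k_{\mathrm{red}}} = D(-\chi - D) = \dim \Gr(D, -\chi)$; a direct calculation using $\dim M^0_{k_{\mathrm{red}}} = w_{k_{\mathrm{red}}}^2 + 2$, the Mukai identity $w_k^2 - w_{k_{\mathrm{red}}}^2 = -2(k-k_{\mathrm{red}})\chi$, and $k - k_{\mathrm{red}} = (n+1)\left(-\tfrac{n\chi}{2} + D\right)$ verifies this. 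The main obstacle is the equality $h^0(E) = h$ in the inductive construction, which the extension itself only guarantees as an inequality $\geq h$ and must be forced indirectly via Proposition \ref{mhtkn}; a secondary technical point is upgrading the pointwise bijection to a morphism of schemes, which follows from the universal family and the locally-free rank-$h$ relative $\Ext^1$-sheaf on $M^{h+\chi}_{k-h, k_{\mathrm{red}}}$.
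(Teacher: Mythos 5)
Your proof is correct and follows essentially the same route as the paper: the same two extension sequences from Lemma \ref{lem:2ses}, the same use of Lemma \ref{lem:extdim-new} to count $\Ext^1(E',\cO_X)$, and the same key trick of forcing $h^0(E)=h$ indirectly via the maximality of $h$ and the emptiness statement of Proposition \ref{mhtkn}; organizing the induction by $\lfloor h/(-\chi)\rfloor$ rather than by $k$ is only a cosmetic difference. The one step you gloss over is the non-emptiness of the base stratum $M^0_{\kred,\kred}$ itself: the paper gets $M_{\sigma_+}(w_{\kred})\neq\emptyset$ from Theorem \ref{thm:mukai} and then twists by a suitable $L\in\Pic^0(X)$ to produce a stable object with no $\cO_X$ among its $\sigma_0$-Jordan--H\"older factors, hence with $\Hom(\cO_X,E)=0$; you should include this, since your $D=0$ base case currently just identifies $M^0_{\kred,\kred}=M^0_{\kred}$ without showing it is non-empty.
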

\begin{proof}
When $k=k_{\mathrm{red}}$, since $w_{\kred}^2\geq 0$, the moduli space
$M_{\sigma_+}(w_{\kred})$ is non-empty by Theorem \ref{thm:mukai}. Since any $E \in
M_{\sigma_+}(w_{\kred})$ has only finitely many Jordan-H\"older factors with respect to $\sigma_0$,
we can use the action of $\Pic^0(X)$ to find an object that does not have $\cO_X$ as a factor, and
in particular satisfies $\Hom(\cO_X, E) = 0$. 
Therefore, $M_{\kred,\kred}^0$ is non-empty. Moreover, it is an open dense subset of
$M^s_{\sigma_+}(w_{\kred})$, and thus of expected dimension $d(k, 0)$.


Now consider the case $0<k-\kred < -\chi$. Then $h = k - \kred$ satisfies equality in
\eqref{thk}.
Let $E$ be an object
in $M^0_{k_{\mathrm{red}},k_{\mathrm{red}}}$. By Lemma \ref{lem:extdim-new}, $\Ext^1(E, \cO_X)$ has
dimension $-\chi$, and thus we can choose a subspace
$V$ of dimension $h$.
We consider the short exact sequence 
\[\cO_X \otimes V^\vee \into \widetilde E \onto E \]
as in Lemma \ref{lem:2ses}.
By the Lemma, $\widetilde E$ is $\sigma_+$-stable. Clearly $\left(\widetilde E\right)_{\mathrm{red}}
= E$ and $H^0(\widetilde E) = V^\vee$. Therefore, $\widetilde E\in M^h_{k, \kred}$, proving the
non-emptiness as claimed.

Our construction depended on a choice of a point in the Grassmannian bundle over $M^0_{\kred, \kred}$ whose
fiber over $E$ is given by the set of $h$-dimensional (or, equivalently, $D$-dimensional) subspaces in $\Ext^1(E, \cO_X) \cong \C^{-\chi}$.
On the other hand, applying Lemma \ref{lem:2ses}, part \ref{enum:sesW}, we see that any object
$F \in M^h_{k, \kred}$ fits into a short exact sequence
\[ \cO_X \otimes H^0(F) \into F \onto F_{\mathrm{red}} \]
with $F_{\mathrm{red}} \in M^0_{\kred, \kred}$, and thus this Grassmannian bundle describes the
entire stratum $M^h_{k, \kred}$.  Its dimension is
\begin{align*}
\dim M^0_{\kred} + \dim Gr(k-k_{\mathrm{red}},-\chi)
& =  w^2_0 +2 -2\kred\chi+(k-k_{\mathrm{red}})(-\chi-k+k_{\mathrm{red}}) \\
& =  w^2_0 +2 -2k\chi+(k-k_{\mathrm{red}})(\chi-k+k_{\mathrm{red}}) \\
& =  w^2_0 +2 -2k\chi-(h-\chi)h  \\
& =d(k,h). 
\end{align*}

For $k-k_{\mathrm{red}} \ge -\chi$, we proceed by induction on $k$. 
Let $h$ be the integer as in the statement of the Proposition.   Let $h'$ be the maximum integer, such that 
\[\Delta_{KLM}\left(\frac{h'}{-\chi}\right)\leq \frac{k-h-k_{\mathrm{red}}}{-\chi}  .\]
By the functional equation of $\Delta_{KLM}$, $h'\geq h+\chi\geq 0$.  By induction on $k$, there is an object
$E$ in $M^{h'}_{k-h,k_{\mathrm{red}}}$. By Lemma \ref{lem:extdim-new} the dimension of
$\Ext^1(E, \cO_X)$ is $h' - \chi \ge h$, and thus we can choose a subspace $V$ of dimension $h$. 
We again consider the extension 
\[\cO_X \otimes V^\vee \into \widetilde E \onto E; \]
applying Lemma \ref{lem:2ses} as before shows that $\tilde E$ is $\sigma_+$-stable with
$\left(\widetilde E\right)_\mathrm{red} \cong \ere$. In particular, $\tilde E\in M^{h''}_{k,k_{\mathrm{red}}}$ for
some $h'' \ge h$. 
Now since $h$ is the maximum number satisfying (\ref{thk}), Proposition \ref{mhtkn} says that
$M^{h''}_{k, \kred}$ is empty for $h'' > h$. Hence $h'' = h$, and so $M^{h}_{k,k_{\mathrm{red}}}$ is non-empty as
claimed.

When equality holds, it remains to show the statement about the dimension. In that case, we have $h'
= h + \chi$ in each induction step, and in particular the remainder $D$ remains preserved at each step. We have a bijective morphism 
\begin{align*}
\mhkkred & \rightarrow M^{h+\chi}_{k-h,k_{\mathrm{red}}} \\ 
E & \mapsto E' = E/\left(\cO_X\otimes \Hom(\cO,E)\right);
\end{align*}
indeed, its inverse is given by associating to $E$ the extension $\widetilde E$ given by 
\[
\cO_X \otimes \Hom(E,\cO_X[1])^\vee \into \tilde E\onto E.
\]
It follows by induction that $M^h_{k, \kred}$ is a Grassmannian-bundle over $M^0_{\kred}$ of
dimension
\begin{align*}
d(k-h, h + \chi) & = 
w_0^2 - 2(k-h)\chi + 2 + (h + \chi)\chi - (h+\chi)^2  \\
& = 
w_0^2 - 2k\chi + 2 + h \chi - h^2 = d(k,h).
\end{align*}
\end{proof}

\begin{Lem}
Let $h, k, k_{\mathrm{red}}\in\Z_{\geq 0}$. Whenever  $M^h_{k,k_{\mathrm{red}}}$ is non-empty, it satisfies
\[ \dim M^h_{k,k_{\mathrm{red}}} \le d(k,h). \]
Moreover, if equality holds, then  $h, k, k_{\mathrm{red}}$ satisfy equality in equation \eqref{thk}. 
\label{dimmhkt2}
\end{Lem}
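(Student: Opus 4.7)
The plan is to argue by induction on $k - \kred$, controlling the dimensions via the quotient morphism
\[
\pi\colon \mhkkred \to \bigsqcup_{h'} M^{h'}_{k-h,\kred}, \qquad E \mapsto E' := E/(\cO_X \otimes H^0(E)).
\]
For the base case $k = \kred$, the surjection $E \onto \ere$ has source and target of the same Chern character, so that $T = 0$ in Lemma~\ref{dec}, hence $E = \ere$ and $h = 0$. Then $M^0_{\kred,\kred}$ is open in $M^s_{\sigma_+}(w_\kred)$ of dimension $w_\kred^2 + 2 = d(\kred,0)$, and equality in \eqref{thk} holds trivially.

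For the inductive step $k > \kred$ (which forces $h \ge 1$), Lemma~\ref{lem:2ses} shows that $\pi$ lands in $\bigsqcup_{h' \ge \max(0, h+\chi)} M^{h'}_{k-h,\kred}$: indeed $\ere$ is preserved by $\pi$, so $(E')_{\mathrm{red}} = \ere \in M^0_\kred$. Stratify $\mhkkred$ into the locally closed pieces $\cF_{h'} := \pi^{-1}(M^{h'}_{k-h,\kred})$. The fiber of $\pi$ over $E' \in M^{h'}_{k-h,\kred}$ parameterises $h$-dimensional subspaces $V \subset \Ext^1(E',\cO_X) \cong \C^{h'-\chi}$ (Lemma~\ref{lem:extdim-new}), subject to the open condition that the connecting map $H^0(E') \to H^1(\cO_X^{\oplus h})$ be injective, which is exactly the condition $h^0(\widetilde E) = h$. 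So each fiber has dimension at most $h(h' - \chi - h)$, and by the induction hypothesis $\dim M^{h'}_{k-h,\kred} \le d(k-h,h')$.

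Combining these, a short computation gives
\[
\dim \cF_{h'} \;\le\; d(k-h,h') + h(h'-\chi-h) \;=\; d(k,h) - h'(h' - h - \chi).
\]
Since $h' \ge \max(0, h+\chi)$, the correction $h'(h' - h - \chi)$ is non-negative, yielding $\dim \mhkkred \le d(k,h)$. If equality holds, some $\cF_{h'}$ realises the bound, which forces both $h' \in \{0,\, h+\chi\}$ and equality in the inductive hypothesis for $(h', k-h, \kred)$. When $h' = 0$, non-emptiness of $M^0_{k-h,\kred}$ forces $k - h = \kred$, so $\tfrac{h}{-\chi} = \tfrac{k-\kred}{-\chi} \le 1$ and hence $\Delta_{KLM}(\tfrac{h}{-\chi}) = \tfrac{h}{-\chi}$, which is equality in \eqref{thk}. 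When $h' = h + \chi$, the functional equation $\Delta_{KLM}(t) = \Delta_{KLM}(t-1) + t$ converts the inductive equality for $(h', k-h, \kred)$ into the desired equality for $(h, k, \kred)$. I expect the main technical point to be the fiber description, specifically verifying that the $GL(h)$-ambiguity in writing down the extension $\cO_X \otimes V^\vee \into \widetilde E \onto E'$ really reduces the parameter space to an open subscheme of $\Gr(h, h'-\chi)$, and that the openness and non-emptiness of $\{h^0(\widetilde E) = h\}$ are handled in families.
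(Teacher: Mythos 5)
Your argument is correct and follows essentially the same route as the paper's proof: the same quotient map $\pi$ to the strata $M^{h'}_{k-h,\kred}$, the same fiber bound $h(h'-\chi-h)$ via subspaces of $\Ext^1(E',\cO_X)$, the identical computation $d(k-h,h')+h(h'-\chi-h)=d(k,h)-h'(h'-h-\chi)$, and the same two-case equality analysis. The only cosmetic differences are inducting on $k-\kred$ rather than $k$ and spelling out the $GL(h)$-ambiguity in the fiber description, which the paper leaves implicit (and which is harmless for the upper bound, since one only needs the fiber to inject into the Grassmannian).
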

\begin{proof}
We prove the statement by induction on $k$, the case $k=0$ being obvious. Similarly, for $h =0$, we have $M^0_{k,k}=M^0_k$, which is an open subset of $M_{\sigma_+}(w_k)$ by Proposition \ref{prop:nonemptyMHkkred}.

We thus assume $k > 0$ and $h > 0$. Given $E \in M^h_{k,k_{\mathrm{red}}}$, again consider the quotient $E'$ (in $\cP_0$) defined by
\[ \cO_X \otimes H^0(E) \into E \onto E', \]
which is $\sigma_+$-stable with $h^0(E') \ge h + \chi$ by Lemma \ref{lem:2ses}. We consider the map 
\[ \pi \colon M^h_{k, \kred} \to \bigcup_{l \ge \max(0, h+\chi)} M^l_{k-h, \kred}, \quad E \mapsto E', \]
where the right-hand-side is considered as a locally closed subset of $M_{\sigma_+}(w_{k-h})$.

It is sufficient to prove the dimension bound for each of the pre-images of the finitely many strata on the right-hand-side.
 By the induction on $k$,
the dimension of $M^l_{k-h,k_{\mathrm{red}}}$ does not exceed $d(k-h,l)$. 
For each $E'\in M^l_{k-h,k_{\mathrm{red}}}$, we have $\dim \Ext^1(E', \cO_X) = l - \chi$ by Lemma
\ref{lem:extdim-new}. Therefore, $\pi^{-1}(E')$ is a locally closed subset of the Grassmannian of
$h$-dimensional subspaces in $\C^l$ (defined by the associated extension having $h$-dimensional space of global sections); in particular its dimension is at most $h (l - \chi - h)$.

Thus, if the pre-image of $M^l_{k-h, \kred}$ is non-empty, its dimension is bound by
\begin{align}
\dim \pi^{-1} \left(M^l_{k-h, \kred}\right) \leq
& \dim M^l_{k-h,k_{\mathrm{red}}}+h(l-\chi-h) \nonumber \\
\leq & d(k-h, l) + h(l-\chi-h) \nonumber \\
= & d(k,h)+2h\chi-l(l-\chi)+h(h-\chi)+h(l-\chi-h) \nonumber \\
= & d(k,h) -l (l-\chi-h) \nonumber \\
\leq & d(k,h). \label{eq:ineqchain}
\end{align}
The last inequality achieves equality only if $l=0\geq h+\chi$ or $l=h+\chi> 0$.

In the former case, $M^0_{k-h,k_{\mathrm{red}}}$ is non-empty only if $k_{\mathrm{red}}=k-h$. Since $h+\chi\leq 0$, we have
$\Delta_{KLM}\left(\frac h{-\chi}\right) = \frac h{-\chi} = \frac{k - \kred}{-\chi}$,
and thus equality in \eqref{thk}.

In the
second case $l = h+\chi$, by the induction on $k$, the second inequality above can only be an equality if $h+\chi$, $k-h$, $k_{\mathrm{red}}$ satisfy equality in \eqref{thk}. By the functional equation of $\Delta_{KLM}$, we conclude
\[ \Delta_{KLM}\left(\frac h{-\chi}\right) 
= \Delta_{KLM}\left(\frac {h + \chi}{-\chi}\right) + \frac h{-\chi}
= \frac{k - h - \kred}{-\chi} + \frac h{-\chi} = \frac{k-\kred}{-\chi}
\]
as claimed.
\end{proof}
\def\mghk{M^{\geq h}_k}

\begin{Thm} \label{thm:maintechnical}
The moduli space $\mhk$ is nonempty if and only if 
\[\frac{k}{-\chi}\geq \Delta_{KLM}\left(\frac{h}{-\chi}\right).\]
Whenever non-empty, $\mhk$ is irreducible with  expected dimension 
$d(k,h)$.
\end{Thm}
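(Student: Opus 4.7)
The strategy is to read off the theorem from the stratification
\[ \mhk = \bigsqcup_{k_{\mathrm{red}} \ge 0} M^h_{k, k_{\mathrm{red}}}, \]
combining Propositions \ref{mhtkn} and \ref{prop:nonemptyMHkkred} with the dimension bounds of Lemmas \ref{lem:dkh} and \ref{dimmhkt2}.

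For the non-emptiness criterion, one direction is immediate: if $\mhk \ne \emptyset$, some stratum $M^h_{k, k_{\mathrm{red}}}$ is non-empty, Proposition \ref{mhtkn} yields $(k - k_{\mathrm{red}})/(-\chi) \ge \Delta_{KLM}(h/(-\chi))$, and $k_{\mathrm{red}} \ge 0$ gives the claimed inequality. For the converse, I would produce an explicit $k_{\mathrm{red}}$ fitting the hypothesis of Proposition \ref{prop:nonemptyMHkkred}. Writing $h = q(-\chi) + D$ with $0 \le D < -\chi$, the formula \eqref{eq:Deltaexplicit} shows
\[ N := (-\chi)\,\Delta_{KLM}\!\left(\tfrac{h}{-\chi}\right) = (-\chi)\frac{q(q+1)}{2} + (q+1)D \]
is a non-negative integer (note $q(q+1)$ is even), so by the hypothesis $k_{\mathrm{red}} := k - N$ is a non-negative integer. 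Equality in \eqref{thk} then holds by construction, and since $\Delta_{KLM}$ is strictly increasing, $h$ is the maximum integer satisfying \eqref{thk}. Proposition \ref{prop:nonemptyMHkkred} then supplies a point of $M^h_{k, k_{\mathrm{red}}} \subset \mhk$.

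For dimension and irreducibility I would run a two-sided sandwich. Let $C$ be any irreducible component of $\mhk$, with generic point $\eta$ lying in a unique stratum $M^h_{k, k_{\mathrm{red}}^*}$; then $C \subseteq \overline{M^h_{k, k_{\mathrm{red}}^*}}$, so $\dim C \le \dim M^h_{k, k_{\mathrm{red}}^*}$, while Lemma \ref{lem:dkh} gives $\dim C \ge d(k,h)$ and Lemma \ref{dimmhkt2} gives $\dim M^h_{k, k_{\mathrm{red}}^*} \le d(k,h)$. All three quantities therefore equal $d(k,h)$, and the refined part of Lemma \ref{dimmhkt2} says this forces equality in \eqref{thk}, pinning down $k_{\mathrm{red}}^* = k - N$ uniquely. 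By Proposition \ref{prop:nonemptyMHkkred}, $M^h_{k, k-N}$ is a Grassmannian bundle over $M^0_{k-N}$, which is an open dense subset of the irreducible variety $M_{\sigma_+}(w_{k-N})$ of Theorem \ref{thm:mukai}; the bundle and its closure are therefore irreducible. Hence $\mhk$ has a unique irreducible component, of dimension $d(k,h)$. The key delicate point that makes the whole argument clean is the integrality of $N$, which both furnishes the valid $k_{\mathrm{red}}$ for the existence construction and guarantees that only a single stratum achieves the top dimension.
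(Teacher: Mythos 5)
Your proposal is correct and follows essentially the same route as the paper: both arguments locate the unique $k_{\mathrm{red}}=k_0$ achieving equality in \eqref{thk} via the integrality of $(-\chi)\Delta_{KLM}\left(\frac{h}{-\chi}\right)$, invoke Proposition \ref{prop:nonemptyMHkkred} for non-emptiness and irreducibility of that stratum, and sandwich all other strata between the component lower bound of Lemma \ref{lem:dkh} and the strict upper bound of Lemma \ref{dimmhkt2}, with Proposition \ref{mhtkn} handling emptiness in the converse direction. Your extra verifications (the explicit formula for $N$ and the strict monotonicity of $\Delta_{KLM}$ ensuring $h$ is maximal) are details the paper leaves implicit, but the argument is the same.
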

\begin{proof}
Observe either by induction, or by the explicit formula \eqref{eq:Deltaexplicit}, that
$\chi \cdot \Delta\left( \frac h{-\chi}\right)$ is an integer. Therefore, given $h$ and $k$
satisfying the assumptions of the Theorem, there is a unique integer $k_0$ with $0 \le k_0 \le
k$ such that equality holds in equation \ref{thk}. By Proposition \ref{prop:nonemptyMHkkred}, for
this choice of $\kred$, the stratum $M^h_{k, \kred}$ is non-empty, irreducible and of expected
dimension $d(k, h)$. 

On the other hand, for all other choices of $\kred$, Lemma \ref{dimmhkt2} shows that the stratum
$\mhkkred$ has dimension strictly smaller than $d(k, h)$. By Lemma \ref{lem:dkh}, it is thus
contained in the closure of a stratum of bigger dimensions.

Conversely, if 
$ \frac{k}{-\chi} < \Delta_{KLM}\left(\frac{h}{-\chi}\right)$, then Proposition \ref{mhtkn} shows
that $M^h_k$ is empty.
\end{proof}

Note that all constructions in this sections have been invariant under the action of $X$ by translation. We note the following consequence of this fact when combined with the proof of Theorem \ref{thm:maintechnical}:
\begin{Prop} \label{prop:grassmannbundle}
Assume that $M^h_k$ is non-empty, and let $k_0$ with $0 \le k_0 \le k$ be such that
\[ \frac{k-k_0}{-\chi} = \Delta_{KLM}\left(\frac h{-\chi}\right). \]
Let $0 \le D < -\chi$ be the remainder of division of $h$ by $-\chi$. Then $M^h_k$ has a dense open subset $M^h_{k, k_0}$, invariant under the action of $X$ by translation, that is isomorphic to a bundle over $M^0_{k_0}$ of Grassmannians of type $\mathrm{Gr}(D, -\chi)$. Its fiber over
a point $F\in M^0_{k_0}$ parameterises iterated extensions of $F$ by $\cO_X$.
\end{Prop}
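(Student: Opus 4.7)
The plan is to deduce this proposition largely from the constructions already carried out in Proposition \ref{prop:nonemptyMHkkred} and Theorem \ref{thm:maintechnical}; the statement is essentially a geometric reorganisation of what has been established there. First I would verify existence and uniqueness of $k_0$: since $\chi \cdot \Delta_{KLM}(h/(-\chi)) \in \Z$ (noted in the proof of Theorem \ref{thm:maintechnical}), and since that theorem guarantees $k/(-\chi) \ge \Delta_{KLM}(h/(-\chi))$, there is a unique $k_0 \in \{0,1,\dots,k\}$ with $(k-k_0)/(-\chi) = \Delta_{KLM}(h/(-\chi))$.

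Next I would show $M^h_{k, k_0}$ is open and dense in $M^h_k$. Density follows from the irreducibility of $M^h_k$ together with a dimension count: for any $k_0' \in \{0,\dots,k\}$ distinct from $k_0$, the tuple $(h, k, k_0')$ does not satisfy equality in \eqref{thk}, so Lemma \ref{dimmhkt2} forces $\dim M^h_{k, k_0'} < d(k, h)$ whenever $M^h_{k, k_0'}$ is non-empty. Since $M^h_{k, k_0}$ has dimension $d(k,h) = \dim M^h_k$ by Proposition \ref{prop:nonemptyMHkkred}, it is the unique top-dimensional stratum and therefore dense. Openness follows from upper-semicontinuity of $\hom(\cO_X, -)$ applied to $E$ and to $\ere$ in flat families. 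Translation invariance is then transparent, as the action of $X$ on $M_{\sigma_+}(w_k)$ preserves $\cO_X$ and commutes with the construction $E \mapsto \ere$.

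For the final assertion about iterated extensions, I would invert the chain of isomorphisms
\[ M^h_{k, k_0} \xrightarrow{\sim} M^{h+\chi}_{k-h, k_0} \xrightarrow{\sim} M^{h+2\chi}_{k-2h-\chi, k_0} \xrightarrow{\sim} \cdots \xrightarrow{\sim} M^D_{k_0+D, k_0} \]
constructed in the proof of Proposition \ref{prop:nonemptyMHkkred}, each step being the canonical quotient $E \mapsto E/(\cO_X \otimes H^0(E))$. Reversing the arrows presents $E$ as the top of a tower of universal extensions by $\cO_X$ built successively from some $F' \in M^D_{k_0+D, k_0}$; in turn, $F'$ is obtained from $F = \ere \in M^0_{k_0}$ by choosing a $D$-dimensional subspace $V \subset \Ext^1(F, \cO_X) \cong \C^{-\chi}$ and forming the extension $\cO_X \otimes V^\vee \into F' \onto F$. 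Since every extension after the first is canonically determined as a universal extension, the whole fibre over $F$ is controlled by the single Grassmannian choice $V \in \mathrm{Gr}(D, -\chi)$, and this is precisely the iterated-extension description claimed. The main technical point to verify carefully is that the universal extensions at each intermediate step stay in the correct stratum and have $\Ext^1$ of the predicted dimension, which follows from Lemma \ref{lem:extdim-new} combined with the functional equation for $\Delta_{KLM}$; this is already implicit in the proof of Proposition \ref{prop:nonemptyMHkkred}, so the main obstacle is essentially notational rather than substantive.
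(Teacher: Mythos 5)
Your proposal is correct and matches the paper's (implicit) argument: the paper states this Proposition without a separate proof, as a direct consequence of Proposition \ref{prop:nonemptyMHkkred} (the Grassmannian-bundle structure and dimension of the stratum with equality in \eqref{thk}), Lemma \ref{dimmhkt2} (all other strata have strictly smaller dimension), and the irreducibility from Theorem \ref{thm:maintechnical}, together with translation-invariance of all constructions. One tiny remark: for openness you do not need semicontinuity applied to $\ere$ in families (which would require knowing that $E\mapsto\ere$ varies well); since $M^h_{k,k_0}$ is locally closed by Definition \ref{def:mkt} and dense by your dimension count, it is automatically open in the irreducible $M^h_k$.
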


\section{Conclusion}
\label{sect:deduce}

We will now prove our main result, which we first recall:
\begin{repThm}{main1}
Assume $\chi \neq 0$.
The Brill-Noether locus $V^r_d(\abs{H})$ is non-empty if and only if 
\begin{equation} \label{mainineq}
\rho + g - 2 \ge D\abs{\chi}-D^2, 
\end{equation} 
where $D$ denotes the remainder of division of $r+1$ by $\abs{\chi}$.
Moreover, when it is non-empty, it is generically smooth and of expected dimension $\rho + g - 2$.

When the above inequality is strict, then $\VrdH$ is irreducible. Otherwise, it is a disjoint union of Grassmannians.
\end{repThm}

\begin{proof}
Using the derived dual on $X$, or, equivalently, Serre duality on the individual curves, we first
reduce to the case $\chi < 0$.

Thus we are in the situation of the previous section. 
By Lemma \ref{prop:gwall}, we have $M_H(0,H,\chi)=M_{\sigma_+}^s(0,H,\chi) = M_{\sigma_+}(w_R)$.
(Note that by Definition \ref{def:wk}, the value $R$ is non-negative by $\chi<0$.)

Hence $V^r_d(\abs{H})$ is the intersection of the Brill-Noether variety $M^{r+1}_R$ studied in the
previous section with the set of sheaves supported on a curve in the linear system $\abs{H}$ (rather
than a translate of such a curve). 
If we consider the action of $X$ on $M_H(0, H, \chi) = M_{\sigma_+}(w_R)$ by translation, then
 each orbit contains finitely many sheaves
supported on a curve in the linear system $\abs{H}$.
Moreover, the action leaves the Brill-Noether variety $M^{r+1}_{R}$ invariant. 

It follows that $\lrd$ is non-empty if and only if $M^{r+1}_R$ is non-empty, with
\begin{align*}
\dim \lrd &  = \dim M^{r+1}_R - 2 = d(R,r+1)-2 \\ 
&  = (0,H,\chi)^2+2+(r+1)\chi-(r+1)^2-2 \\
& = 2g - 2 - (r+1)(r+1-\chi) = \rho +g-2.
\end{align*}
Let $r+1=s(-\chi)+D$, where $s\in \Z_{\geq 0}$, $0\leq D<\chi$. By Theorem \ref{thm:maintechnical}, $M_{R}^{r+1}$ is non-empty if and only if 
\begin{align*}
\frac{R}{-\chi} & \geq \left(\frac{r+1}{-\chi}-\frac{1}{2}\floor*{\frac{r+1}{-\chi}}\right)\left(\floor*{\frac{r+1}{-\chi}}+1\right)\\
\Longleftrightarrow R  & \geq  \frac{1}{2}(r+1+D)(s+1).
\end{align*}
The bound on the right is an integer: indeed, when $s$ is even, $r+1$ and $D$ have the same parity.
Thus, we may replace $R$ by $\frac{H^2}{-2\chi}$, omitting the round-down (see Definition \ref{def:wk}).
Therefore, $M_{R}^{r+1}$ and $\lrd$ are non-empty if and only if 
\begin{eqnarray*}
2g-2-(r+1+D)(r+1-D-\chi)& \geq & 0 \\ 
\Longleftrightarrow
2g - 2 - (r+1)(r+1 - \chi) + D^2 + D \chi & \ge & 0 \\
\Longleftrightarrow 
\rho + g-2 & \ge & D(-\chi) - D^2.
\end{eqnarray*}
Now assume that the inequality \eqref{mainineq} is strict; we need to show that $\VrdH$ is irreducible. Note that we can define $\VrdH$ equivalently as the preimage
of $0 \in \hatX$ under the map
\begin{equation} \label{eq:defD}
D \colon M^{r+1}_R \to \hatX, \quad E \mapsto \det(E) \otimes \cO(-H). \end{equation}
Now let $k_0$ be as in Proposition \ref{prop:grassmannbundle}, and consider the open subset $M^{r+1}_{R, k_0} \subset M^{r+1}_R$. Since this inclusion is translation-invariant, it suffices to show that the  restriction factors as
\[ M^{r+1}_{R, k_0} \to M^0_{k_0} \xrightarrow{\underline{\det} \times \underline{\widehat{\det}}} \hatX \times X \to \hatX; \]
here the first map is the projection to the base of the Grassmannian bundle given in the Proposition \ref{prop:grassmannbundle},  the second map is the restriction of the Albanese map appearing in Proposition \ref{prop:albanese}, and the third one is, up to shift, given by projection to the first factor.

We claim that under our assumptions, each of these maps has connected fibers. Indeed, this is obvious for the first and the third map. For the second map, note that by Proposition \ref{prop:grassmannbundle}, the dimension of $M_{k_0}$ must be equal to 
\[ \dim M_{k_0} = \dim M_R^{r+1} - \dim \mathrm{Gr}(D, -\chi) 
= \rho + g - D(-\chi - D). \]
Thus, if the inequality \eqref{mainineq} is strict, then $M_{k_0}$ has dimension at least 4, and we can apply 
Proposition \ref{prop:albanese} to deduce connectedness of fibers for $\underline{\det} \times \underline{\widehat{\det}}$.

The composition is thus a map between smooth irreducible varieties with connected fibers; since the generic fiber is smooth and connected, it is irreducible. Since translation acts transitively on the set of fibers, it follows that $M^{r+1}_{R, k_0}$ is smooth and irreducible.

Otherwise, if equality holds in \eqref{mainineq}, then $M_{k_0}$ is two-dimensional, i.e.~$w_{k_0}^2 = 0$. Hence we have 
$-k_0 = \rk(w_{k_0}) = \frac{H^2}{2\chi} = \frac{g-1}{\chi}$. By Proposition
\ref{prop:albanese}, the moduli space $M_{k_0}$ contains $k_0^2$ many elements with determinant
$\cO(H)$ (rather than one of its translates). The variety $\VrdH$ consists of one Grassmannian for each of these objects. 
\end{proof}

\begin{Rem}
Since $0 \le D < -\chi$, the right-hand-side of \eqref{mainineq} is always positive, as we should expect.
On the other hand, from $r+1 \ge D$ one can verify that if $\rho$ violates \eqref{mainineq}, then in fact
$\rho < 0$.
\end{Rem}

\begin{Rem}
This inequality gives exactly the same bound as that in Theorem A.1 in \cite{KLM}:
\begin{align}\rho+r(r+2)\geq -\floor*{\frac{r}{-\chi}}\chi\left(r+1+\frac{1}{2}\chi\left(\floor*{\frac{r}{-\chi}}+1\right)\right).
\label{klmbd}
\end{align}
\label{rem:klmbd}
\end{Rem}
When $\floor*{\frac{r}{-\chi}}=\floor*{\frac{r+1}{-\chi}}=s$, the inequality (\ref{klmbd})  is equivalent to 
\begin{align*}
\rho+r(r+2) & \geq -s\chi(r+1-\frac{1}{2}(-\chi)(s+1)) \\ 
\Longleftrightarrow
2\rho+2r(r+2)& \geq 2(r+1-D)(r+1- \frac{1}{2}(r+1-D)+\frac{1}{2}\chi)\\ 
\Longleftrightarrow
\rho +g +(r+1)(\chi+r+1)-2& \geq (r+1-D)(r+1+D+\chi) \\ 
\Longleftrightarrow
\rho+g-2& \geq D(-\chi)-D^2. \\ 
\end{align*}

In the remaining case $r+1=s(-\chi)$, the inequality (\ref{klmbd})  is equivalent to 
\begin{align*}
\rho+r(r+2) & \geq (r+1+\chi)(r+1+\frac{1}{2}\chi s) \\ 
\Longleftrightarrow
2\rho+2r(r+2)& \geq (r+1+\chi)(r+1)\\ 
\Longleftrightarrow
\rho +g -2& \geq 0.  
\end{align*} \hfill $\square$

\begin{proof}[Proof of Theorem \ref{mainthm2}]
When $k<-R$, we have $\vv^2<0$. The inequality fails since $\vv^2-(r+1)(r+1-\chi)<0\leq  D(-\chi)-D^2$. The moduli space $M^{r+1}_H(\vv)$ is empty due to Proposition \ref{prop:mv0square0}.

When $k\geq -R$, by Lemma \ref{prop:gwall}, we have $M_H(k,H,\chi)=M_{\sigma_+}^s(k,H,\chi) = M_{\sigma_+}(w_{k+R})$. It follows that $M^{r+1}_H(\vv)=M^{r+1}_{k+R}$.  We apply Theorem \ref{thm:maintechnical} for the space $M_{k+R}^{r+1}$. This space is non-empty if and only if 
\begin{align*}
\frac{k+R}{-\chi} & \geq \left(\frac{r+1}{-\chi}-\frac{1}{2}\floor*{\frac{r+1}{-\chi}}\right)\left(\floor*{\frac{r+1}{-\chi}}+1\right)\\
\Longleftrightarrow R  & \geq  \frac{1}{2}(r+1+D)\left(\frac{r+1-D}{-\chi}+1\right)-k.
\end{align*}
Note that the bound on the right is an integer, we may substitute this bound to the constraint on $R$. The space  $M_{k+R}^{r+1}$ is non-empty if and only if 
\begin{align*}
2g-2-(r+1+D)\left(\frac{r+1-D}{-\chi}+1\right)(-\chi)+2k\chi & \geq 0 \\ 
\Longleftrightarrow
\vv^2+(r+1)\chi-(r+1)^2& \geq D(-\chi)-D^2.
\end{align*}
By Theorem \ref{thm:maintechnical}, when $M^{r+1}_{R+k}$ is non-empty, it is irreducible of the expected dimension:
\[d(k+R,r+1)=\vv^2+2-(r+1)(r+1-\chi).\]
\end{proof}

\section{Generalisation}
\label{sect:moregeneral}

Let $(X, H)$ be a polarized abelian surface satisfying Assumption \star.
In this section, we explain how to adapt all our arguments from abelian Picard rank one to $X$.

 Let $\Lambda_H \cong \Z^3$ denote the image of the map
\[ v_H \colon K(X) \to \R^3, \quad E \mapsto \left(\ch_0(E), H.\ch_1(E), \ch_2(E) \right).\]
We will only consider stability conditions for which the central charge factors via $v_H$, and denote the space of such stability conditions by $\Stab_H(X)$. The pair $\sigma_{\alpha, \beta} := \left(\Coh^\beta X, Z_{\alpha, \beta}\right)$ defines a stability condition on $D^b(X)$  and there is a continuous map from $\R_{>0} \times \R \to \Stab(X)$. The only difference here is that $\Stab_H(X)$ is not the whole space but a slice of the whole space. The slope function $\nu_{\alpha, \beta}$ is defined in the same way. 

The Mukai pairing equips $\Lambda_H\otimes \R$ with the quadratic form $Q$, and identifies the upper half plan $\R_{>0} \times \R$
with the projectivization of the negative cone of $Q$, via taking the kernel of $\Zab$ in $\Lambda_H\otimes \R$. All the propositions in Section \ref{sect:background} hold for the higher Picard rank case.

In the proof of Proposition \ref{prop:gwall}, we use the fact that `$\Im Z_{\alpha, 0}(E)$ is of the form $\Z_{\ge 0} \cdot H^2$ for all $E \in \Coh^\beta X$'. This also holds by Assumption (*). For  Lemma \ref{Owall}, we shall modify the statement  to `$v_H(E_i)$ is either $v_H(w_{k'})$ or $(r,0,0)$', and prove it in the same way as the Picard number one case. Similarly, for Corollary \ref{cor:ston}, we change the condition to `for any $\sigma_0$-stable object $O$ such that $v_H(O)=(r,0,0)$'. Note that this modification does not affect the argument for Lemma \ref{lem:ext}, which is the only place that uses Corollary \ref{cor:ston}. All the other statements do not rely on the Picard rank.

\bibliography{all}                      
\bibliographystyle{halpha}     

\end{document}